\documentclass[11pt]{article}
\usepackage{amsmath}
\usepackage{float}
\usepackage{amsfonts}
\usepackage{amsthm}
\usepackage{amssymb, setspace}
\usepackage{color,graphicx,epsfig,geometry,hyperref,fancyhdr}
\usepackage[T1]{fontenc}
\usepackage{subfig}
\usepackage{comment}
\usepackage{commath}
\usepackage{bbm}
\usepackage{listings}
\usepackage{mathrsfs,fleqn}
\newtheorem{theorem}{Theorem}[section]

\newtheorem{lemma}{Lemma}[section]

\newcommand{\N}{\mathbb{N}}
\newcommand{\Z}{\mathbb{Z}}
\newcommand{\R}{\mathbb{R}}
\newcommand{\C}{\mathbb{C}}
\newcommand{\pdr}{\partial_r}

\newcommand{\e}{\text{ev}}
\newcommand{\p}{\text{pr}}
\newcommand{\paren}[1]{\left( #1 \right) }
\DeclareSymbolFont{matha}{OML}{txmi}{m}{it}
\DeclareMathSymbol{\varv}{\mathord}{matha}{118}

\graphicspath{{paper-pics/}}


\begin{document}

\begin{flushleft}
\Large 
\noindent{\bf \Large Well-posedness for the biharmonic scattering problem for a penetrable obstacle}
\end{flushleft}

\vspace{0.2in}
{\bf  \large Rafael Ceja Ayala}\\
\indent {\small School of Mathematical and Statistical Sciences, Arizona State University, Tempe, AZ 85287. }\\
\indent {\small Email:  \texttt{rcejaaya@asu.edu} }\\

{\bf  \large Isaac Harris}\\
\indent {\small Department of Mathematics, Purdue University, West Lafayette, IN 47907. }\\
\indent {\small Email: \texttt{harri814@purdue.edu} }\\

{\bf  \large Tonatiuh S\'anchez-Vizuet}\\
\indent {\small Department of Mathematics, The University of Arizona, Tucson, AZ  85721. }\\
\indent {\small Email: \texttt{tonatiuh@arizona.edu} }\\


\begin{abstract}
\noindent We address the direct scattering problem for a penetrable obstacle in an infinite elastic two-dimensional Kirchhoff--Love plate. Under the assumption that the plate's thickness is small relative to the wavelength of the incident wave, the propagation of perturbations on the plate is governed by the two-dimensional biharmonic wave equation, which we study in the frequency domain. With the help of an operator factorization, the scattering problem is analyzed from the perspective of a coupled boundary value problem involving the Helmholtz and modified Helmholtz equations. Well-posedness and reciprocity relations for the problem are established. Numerical examples for some special cases are provided to validate the theoretical findings.\\
\end{abstract}

\noindent \textbf{Keywords:} Biharmonic scattering, reciprocity relations, Helmholtz equation, modified Helmholtz equation, Kirchhoff--Love plate, bi-Laplacian operator.\\

\noindent \textbf{AMS subject classifications(2020):} 47A40, 74J05, 74H25, 35J35.

\section{Introduction}

In this paper, we study the direct biharmonic scattering problem for a penetrable obstacle in an infinite two-dimensional elastic Kirchhoff-Love plate. In recent years, the study of biharmonic problems has garnered significant attention due to their wide applications in nondestructive testing. For instance, ultra-broadband elastic cloaking devices \cite{nonlinearcloak,elasticcloak} which has applications ranging from the car industry to anti-earthquake passive systems for smart buildings (see \cite{elasticcloak} for more details). Some studies have touched on the global uniqueness problem for the recovery of potential functions or medium parameters associated with the biharmonic operators  \cite{firstorderper,orderperbiharmonic,partialdatainverse}. These studies have promising practical applications such as brain imaging \cite{IBPBI,scatteringformulti} and problems associated with geodynamical systems which arise in geomagnetic anomaly detection \cite{maganomalies,maganomalies2}. Also of interest are the reconstruction of densities and internal sources by applying active measurements \cite{passivemeasure} (where one actively sends a priori known probing sources and collects the responses for identification purposes) as well as the analysis of inverse problems for biharmonic scattering problems \cite{inversebackscattering}. Although some theory and results have been developed \cite{BIE-biharmonic,logestimates}, work remains to be done for the case of biharmonic wave scattering and transmission.

Motivated by the work \cite{biwellposed,novelbi} and their analysis on the biharmonic wave clamped scattering problem, we address the problem concerning continuous traces at the boundary of the medium. One of the main differences between these references and the present manuscript is that we deal with the transmission problem that arises when dealing with a penetrable obstacle, while they consider the impenetrable case only. Although the well-posedness of the constant-coefficients case of this problem can be considered a particular case of the Agmon-H\"ormander framework for differential equations with simple charactristics \cite{AgHo}, our result generalizes also to equations with variable coefficients. Moreover, even for the constant-coefficients case, here we present an explicit direct argument that, to the best of our knowledge, was absent from the literature until this effort. We analyze the problem through a splitting of the biharmonic wave operator \cite{highlyacc,frameworkmulti}, where two auxiliary functions play a crucial part on the development of the theory; these are the Helmholtz and modified Helmholtz wave functions \cite{optiforinversebiha}. We address the uniqueness of the scattering problem using Rellich's lemma and the exponentially decaying property of the modified Helmholtz wave function. The analysis includes showing that the problem is of Fredholm type and establishing the well-posedness of the scattering problem. In many mathematical investigations, changing the form of incident field can dramatically alter the analysis and the results of the scattering problem. In this study we consider the cases when the incident field is either a plane wave or a point source. We also study two important reciprocity relationships for the scattered field: one of which deals with far-field data and the other considers near-field data. Relationships of this kind are well known for acoustic scattering but have not been studied for the biharmonic scattering problem.

The paper is organized as follows: the next section outlines the direct scattering problem under consideration, detailing the conditions of the medium. Section \ref{uniq} delves into the uniqueness of the scattering problem under consideration. In Section \ref{Fred}, we show that the problem is Fredholm of index 0. We address two classical reciprocity relations for point sources and plane waves in Section \ref{recip-rel}. Finally, the last section presents numerical experiments to verify and validate the theoretical findings for penetrable scatterers.

\section{Formulation of the problem}\label{direct-prob-LDSM}
Consider that a scatterer occupying the domain $D\subset \mathbb R^2$ with $C^2$-smooth boundary $\partial D$ is compactly contained in the interior of a region enclosed by a curve $\Gamma$ such that $\overline{D} \subset \text{int}(\Gamma)$. We will refer to $\Gamma$ as the measurement curve.  The scatterer is illuminated by a known incident field, $u^i$, which will be assumed to be either
$$
u^i(x, d)=\text{e}^{\text{i}kx \cdot d} \quad \text{a plane wave with the incident direction $d \in \mathbb{S}^{1}$}
$$ 
or 
$$
u^i(x, {z})= \mathbb{G}(x,z;k) \quad \text{ a point source located at $z \in  \Gamma$}.
$$
The function 
\begin{equation}\label{fundamentalsol}
\mathbb{G}(x,z;k) =\frac{\text{i}}{8k^2} \left[ H^{(1)}_0(k|x-z|) -  H^{(1)}_0(\text{i}k|x-z|)\right] \quad \text{for $x\neq z$}
\end{equation}
is the `radiating' fundamental solution to the operator $(\Delta^2 -k^4)$, and $H^{(1)}_\ell$ denotes the Hankel function of first kind of order $\ell$ and $k>0$ is the wavenumber. Both incident fields depend on one parameter: the incident direction $d \in \mathbb{S}^1$ in the case of the plane wave, and the location of the source $z \in \Gamma$  for the point source. This dependence has been made explicit in the notation.

The interaction of the incident field and the scatterer denoted by $D$ produces a total wavefield $u$ consisting of the superposition of the incident field $u^i$ and a radiating scattered field $u^s(x) \in H^2_{loc}(\mathbb{R}^2)$ that satisfies 
\begin{align}
\Delta^2 u^s-k^4 n(x)u^s=-k^4 \big(1-n(x)\big) u^i \quad & \text{in} \hspace{.2cm}  \mathbb{R}^2  \label{direct2}.
\end{align}
The total field $u$ satisfies the transmission conditions 
\begin{align}
 [\![ u ]\!] =[\![\partial_\nu u ]\!] =[\![\Delta u ]\!] =[\![\partial_\nu \Delta u ]\!] =0\,, \label{direct3}
\end{align}
where, for a given function $\phi$, the notation 
$$
[\![\phi ]\!] := \phi_{+}\big|_{\partial D} - \phi_{-}\big|_{\partial D}
$$
denotes the discontinuity across the boundary $\partial D$, and the operators $_+\big|_{\partial D}$ and $_-\big|_{\partial D}$ represent the trace, or restriction, taken  respectively from $\mathbb R^2\setminus\overline{D}$ and $D$. In addition,the scattered field and its Laplacian are required to satisfy the Sommerfeld radiation conditions
\begin{equation} \label{direct4}
\lim_{|x| \to \infty} \sqrt{r}(\pdr{u^s} - \text{i} k u^s) = 0,  \quad \text{ and } \quad \lim_{|x| \to \infty} \sqrt{r}(\pdr{\Delta u^s} - \text{i} k \Delta u^s) = 0
\end{equation}
for $r :=|x|$ which is satisfied uniformly for all directions $\hat{x}:=x/|x|\in \mathbb{S}^1$. We will require that the coefficient $n(x)$ is such that supp$(n(x)-1)=D$ with $n \in L^{\infty}(D)$ and that
\begin{equation}\label{absorbing}
\text{Im}(n(x))\geq C>0 \quad \text{ for a.e. $x \in  D$}.
\end{equation}
This condition is known in the literature as the ``absorbing obstacle condition'' i.e. similar to acoustic scattering problems.

The main goal of this paper is to prove the well-posedness for the direct scattering problem \eqref{direct2}--\eqref{direct4} and to derive reciprocity relations for the quantities 
$$
u^{\infty}(\hat{x},d) \,\, \textrm{ for $\hat{x},d \in \mathbb{S}^1$ } \quad \text{ and } \quad u^s(x,z)\,\, \textrm{ for $x,z \in \Gamma$}.
$$
These types of relations are often used in studying the inverse problem of recovering the scatterer from either far-field or near-field data (see e.g. \cite{DongLi23}).
 
Just as in \cite{biwellposed,optiforinversebiha,novelbi} we will make use of the fact that the scattered field $u^s$ in $ \mathbb{R}^2\setminus\overline{D}$ can be expressed as the sum of a propagating field $u_\p$ and and evanescent field $u_\e$ which, respectively, satisfy the Helmholtz equation (wave number $k$) and modified Helmholtz equation (wave number $\text{i}k$)
\begin{align}
     (\Delta +k^2)u_\p=0\quad \text{ and } \quad (\Delta-k^2)u_\e=0\quad\text{in}\hspace{.2cm} \mathbb{R}^2\setminus\overline{D}\,,
     \label{hemholtzmodified}
\end{align}
along with the Sommerfeld radiation condition for $u_\p$, i.e.,
\begin{align}
\lim_{r \to \infty} \sqrt{r}(\pdr{u_\p} - \text{i} k u_\p) = 0. 
\label{radiupue}
\end{align}
Rewriting the biharmonic operator using the factorization $\Delta - k^4 = (\Delta-k^2)(\Delta+k^2)$ it follows that if $u^s=u_{\text{pr}} + u_{\text{ev}}$, then 
$$
\left(\Delta^2 + k^4\right)u^s = (\Delta-k^2)(\Delta+k^2)u_{\text{pr}} + (\Delta+k^2)(\Delta-k^2)u_{\text{ev}}=0 \quad \text{in} \,\, \mathbb{R}^2\setminus\overline{D},
$$
proving that $u^s = u_{\text{pr}} + u_{\text{ev}}$ indeed satisfies the biharmonic problem. 

To show that the decomposition indeed satisfies the Sommerfeld radiation condition we will start with the evanescent component and proceed by a Fourier expansion. Let $B_R$ be a disk of radius $R>0$ centered at the origin and such that $\overline{D} \subset B_R$. In the exterior of $B_R$, separation of variables can be used to show that
$$
u_{\text{pr}}(r,\theta)=\sum_{|n|=0}^{\infty}\frac{H^{(1)}_\ell (kr)}{H^{(1)}_\ell (kR)}u^{(\ell)}_{\text{pr}}\text{e}^{\text{i}\ell \theta} 
\qquad \text{ and } \qquad
u_\e( r,\theta)=\sum_{|n|=0}^{\infty}\frac{H^{(1)}_\ell (\text{i}kr)}{H^{(1)}_\ell (\text{i}kR)}u^{(\ell)}_{\text{ev}}\text{e}^{\text{i}\ell\theta},
$$
where $u_{\text{pr}}^{(\ell)}$ and $u^{(\ell)}_{\text{ev}}$ denote the $\ell$-th Fourier coefficients of $u_{\text{pr}}(R,\theta)$ and $u_\e(R,\theta)$, respectively. Using the well-known facts (see e.g. \cite{NIST}) for all $\ell\in \mathbb{Z}$
\begin{enumerate}
\item[(i)] ${\displaystyle | H^{(1)}_\ell (\text{i} kr)| = \mathcal{O}\big(r^{-1/2} \text{e}^{-kr}\big) \quad \text{as} \,\, r \to \infty\,,}$

\item[(ii)] ${\displaystyle \frac{\mathrm{d}}{\mathrm{d}t}H^{(1)}_\ell(t)=\frac{1}{2} \Big(H^{(1)}_{\ell-1}(t)+H^{(1)}_{\ell+1}(t)\Big)}$\,,
\end{enumerate}
and the Fourier expansion for $u_{\text{ev}}$, we can show that $\partial_r u_\e$ decays exponentially as $r\to \infty$. This, together with the observation that $u_\p$ is a radiating solution to the Helmholtz equation, implies that the scattered field $u^s$ has the following asymptotic behavior
\[
u^s(x)=\frac{\text{e}^{\text{i}\pi/4}}{\sqrt{8\pi k}} \frac{\text{e}^{\text{i}k|x|}}{\sqrt{|x|}}\left\{u^{\infty}(\hat{x})+\mathcal{O}\left(\frac{1}{|x|}\right) \right\}\hspace{.3cm}\text{as}\hspace{.3cm}|x|\longrightarrow \infty\,,
\]
where $u^{\infty}(\hat{x})$ denotes the far-field pattern of the scattered field $u^s$. The asymptotic expression above implies that $u^{\infty}(\hat{x})=u_\p^{\infty}(\hat{x})$ and therefore (see e.g. \cite{HLL-biharmonic})
\begin{align}
    u^{\infty}(\hat{x})=\int_{\partial B_R}\Big(u_\p(y)\partial_{\nu_y}\text{e}^{-\text{i}k\hat{x}\cdot y}-\partial_{\nu_y}u_\p(y) \text{e}^{-\text{i}k\hat{x}\cdot y}\Big) \, \text{d}s(y)\label{far-fieldpattern}.
\end{align}
Notice the far-field data does not contain any information about the function $u_\e$ as it decays exponentially. 

\color{black}
\section{Uniqueness of the biharmonic problem}\label{uniq}
In this section, we will address the question of uniqueness for the direct scattering problem  \eqref{direct2}--\eqref{direct4}. To this end, we will consider the case when the incident field $u^i=0$, and ask if a solution to \eqref{direct2}--\eqref{direct4} vanishes for a.e. $x \in \mathbb{R}^2$. Just as in \cite{biwellposed,novelbi}, we will first prove uniqueness and then we will show that the problem is Fredholm of index zero to obtain the existence of a solution. 

Recall that $B_R$ is the ball of radius $R>0$ centered at the origin and define the Hilbert space 
\begin{equation}\label{hilbertspace}
H^2\left(B_R,\Delta^2\right) := \big\{u\in H^2(B_R) \, \,:\, \Delta^2 u\in L^2(B_R) \big\}
\end{equation}
equipped with the standard graph norm and inner-product. To study the variational formulation of the scattering problem in this function space, we will use the surface differential operators on $\partial B_R$
\begin{equation}\label{defN}
\begin{aligned}
Mu :=\,&\mu \Delta u-(1-\mu)\left(\frac{1}{R}{\partial_r u}+\frac{1}{R^2}{\partial^2_{\theta} u} \right) \label{defM}\,,\\[1.5ex]
Nu :=\,&-{\partial_r}\Delta u-(1-\mu )\frac{1}{R^2}{\partial_\theta}\left({\partial_{\theta}\partial_r u}-\frac{1}{R}{\partial_{\theta} u}\right)\,,
\end{aligned}
\end{equation}
which are the spherical cases of those defined in \cite[Equation 1.3]{biwellposed}. Above, the parameter $\mu$ (Poisson's ratio) will be considered to be a constant in the interval $[0,1/2)$, and $\partial_r$ and $\partial_\theta$ are the radial and angular derivatives in polar coordinates. The mapping properties of these operators in Sobolev spaces have been extensively studied in \cite{HsWe2021}. In particular, \cite[Lemma 5.7.1]{HsWe2021} guarantees that
\[
u \mapsto \big(Nu , Mu\big)^{\top}
\]
is a continuous linear mapping from $H^2(B_R,\Delta^2)$ into $H^{-3/2}(\partial B_R)\times H^{-1/2}(\partial B_R)$, while \cite[Theorem 5.7.3]{HsWe2021} ensures that the mapping has a bounded right inverse. With this, just as in \cite{biwellposed}, by appealing to Green's identities and defining for a fixed value of $\mu$
\begin{equation}
a(u,\varphi):=\int_{B_R}\Big( \mu \Delta u\Delta \overline{\varphi}+(1-\mu ) \sum_{i,j=1}^2{\partial_{x_{i} x_{j}} u} \, {\partial_{x_{i} x_{j}} \overline{\varphi}}\Big) \, \text{d}x\,, \label{def-varA}
\end{equation}
we have that 
$$
 \int_{B_R} \overline{\varphi} \Delta^2 u\, \text{d}s=a(u,\varphi)-\int_{\partial B_R} \Big(\overline{\varphi}Nu^s+\partial_r \overline{\varphi} Mu^s\Big) \, \text{d}s
$$
for all $u \in H^2(B_R,\Delta^2)$ and $\varphi \in H^2(B_R)$. The first term inside the integral over $\partial B_R$ can be interpreted as a duality pairing between $H^{3/2}(\partial B_R)$ and $H^{-3/2}(\partial B_R),$ while the second term can be seen as a duality pairing between $H^{1/2}(\partial B_R)$ and $H^{-1/2}(\partial B_R).$

Now, recall the direct problem given by \eqref{direct2}--\eqref{direct4} for the scattered field $u^s$. From theconditions biharmonic equation in \eqref{direct2} we know that the scattered field $u^s \in H^2(B_R,\Delta^2)$ satisfies 
\begin{align}
a \big(u^s,\varphi \big)-k^4\int_{B_R}n u^s\overline{\varphi} \, \text{d}x - \int_{\partial B_R} \Big(\overline{\varphi}Nu^s+\partial_r \overline{\varphi} Mu^s\Big) \, \text{d}s = k^4\int_{D} (n-1)u^i \overline{\varphi}\, \text{d}x \label{varform1}
\end{align}
for all $\varphi \in H^2(B_R)$. On the volume integral on the right hand side, we have used the assumption that supp$(n-1)=D$. Notice, that \eqref{varform1} is a variational formulation of the direct scattering problem in $B_R$ that does not incorporate the radiation condition. {By using this variational formulation along with the ``absorbing obstacle'' condition \eqref{absorbing} on the coefficient $n$, } we can now show that the when the incident field is trivial the scattered field is necessarily trivial.

\begin{theorem} \label{up=0}
The direct scattering problem \eqref{direct2}--\eqref{direct4} has at most one solution.
\end{theorem}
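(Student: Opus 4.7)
The plan is to exploit the absorbing obstacle condition \eqref{absorbing} via an energy identity derived from the variational formulation \eqref{varform1} with $u^i=0$. Taking $\varphi = u^s$, the sesquilinear form $a(u^s,u^s)$ is real-valued (since the constant $\mu$ is real and the integrand in \eqref{def-varA} becomes a sum of squared magnitudes), so it disappears upon taking imaginary parts. Using $\text{supp}(n-1)=D$, the imaginary part of \eqref{varform1} reduces to
\begin{equation*}
k^4 \int_D \text{Im}(n)\,|u^s|^2\,\mathrm{d}x \,=\, -\text{Im}\int_{\partial B_R}\Big(\overline{u^s}\,Nu^s + \partial_r\overline{u^s}\,Mu^s\Big)\,\mathrm{d}s,
\end{equation*}
so that uniqueness reduces to controlling the sign of the right-hand side as $R\to\infty$.

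To handle the boundary term I would use the splitting $u^s = u_\p + u_\e$ introduced in Section \ref{direct-prob-LDSM}. The evanescent component $u_\e$ and all its derivatives decay exponentially, so every contribution to the boundary integral involving $u_\e$ (including cross terms with $u_\p$, which are integrated over a curve of length $\mathcal{O}(R)$) vanishes as $R\to\infty$. For the propagating component, $\Delta u_\p = -k^2 u_\p$ combined with the Sommerfeld condition $\partial_r u_\p = \text{i}k u_\p + o(r^{-1/2})$ and the far--field expansion \eqref{far-fieldpattern} yields
\begin{equation*}
-\text{Im}\int_{\partial B_R}\Big(\overline{u^s}\,Nu^s+\partial_r\overline{u^s}\,Mu^s\Big)\,\mathrm{d}s \,\longrightarrow\, -c\,\|u^\infty\|_{L^2(\mathbb{S}^1)}^2
\end{equation*}
for a positive constant $c$. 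Since the left-hand side of the energy identity is non-negative by \eqref{absorbing} and this limit is non-positive, both must vanish. We conclude $u^s \equiv 0$ on $D$ and $u^\infty \equiv 0$ on $\mathbb{S}^1$.

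To propagate vanishing to the exterior, observe that $u_\p^\infty = u^\infty = 0$ and $u_\p$ satisfies the Helmholtz equation with the Sommerfeld radiation condition in $\mathbb{R}^2\setminus\overline{D}$; Rellich's lemma together with unique continuation therefore yields $u_\p \equiv 0$ in $\mathbb{R}^2\setminus\overline{D}$. The transmission conditions \eqref{direct3}, applied with $u^s\equiv 0$ inside $D$, then force the Dirichlet and Neumann traces of $u_\e = u^s$ on $\partial D$ from the exterior to vanish. An energy estimate for $(\Delta - k^2)u_\e = 0$ in the exterior---multiply by $\overline{u_\e}$, integrate by parts, and use the vanishing Cauchy data on $\partial D$ together with exponential decay at infinity to discard all boundary contributions---produces $\int_{\mathbb{R}^2\setminus\overline{D}}\big(|\nabla u_\e|^2 + k^2|u_\e|^2\big)\,\mathrm{d}x = 0$, hence $u_\e \equiv 0$ in the exterior. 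Therefore $u^s \equiv 0$ throughout $\mathbb{R}^2$.

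The main obstacle will be computing the limit of the boundary integral involving the trace operators $M$ and $N$, whose definitions \eqref{defM}--\eqref{defN} mix second and tangential derivatives weighted by $\mu$. Tracking which $\mu$--dependent terms survive after taking imaginary parts and applying the Sommerfeld asymptotics requires some algebraic bookkeeping. A cleaner alternative, should the bookkeeping prove unwieldy, is to apply Green's second identity directly to the biharmonic operator, which produces the simpler boundary integrand $\overline{u^s}\,\partial_\nu\Delta u^s + \partial_\nu u^s\,\Delta\overline{u^s}$; after substituting $\Delta u_\p = -k^2 u_\p$ and the Sommerfeld condition, the sign of the limit becomes transparent.
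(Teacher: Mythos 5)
Your proposal is correct and follows essentially the same route as the paper: take $\varphi=u^s$ in the variational identity, use the absorbing condition and the imaginary part to force the boundary term (evaluated via the $u_\p+u_\e$ splitting and the Sommerfeld/exponential-decay asymptotics) to vanish in the limit, conclude $u^s=0$ in $D$ and $u_\p=0$ outside via Rellich's lemma, and finish with a Green's-identity energy estimate for $u_\e$ using its vanishing trace on $\partial D$. The paper handles the $M$, $N$ bookkeeping you flag by citing the asymptotics $Mu^s=-k^2u_\p+\mathcal{O}(R^{-3/2})$ and $Nu^s=k^2\partial_r u_\p+\mathcal{O}(R^{-5/2})$ from Bourgeois--Hazard, and states the conclusion as $\lim_{R\to\infty}\int_{\partial B_R}|u_\p|^2\,\mathrm{d}s=0$ rather than through $\|u^\infty\|_{L^2(\mathbb{S}^1)}$, which is equivalent.
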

\begin{proof}
To prove the claim, since the scattering problem \eqref{direct2}--\eqref{direct4} is linear we only need to show that $u^i=0$ implies that $u^s=0$. To this end, we take $\varphi=u^s$ and $u^i=0$ in \eqref{varform1} to obtain that 
$$
a(u^s,u^s)-k^4\int_{B_R}n|u^s|^2 \, \text{d}x-\int_{\partial B_R}\Big(\overline{u^s}Nu^s+{\partial_r \overline{u^s}}Mu^s \Big)\,  \text{d}s=0
$$
for any fixed constant parameter $\mu$. Taking the imaginary part of the expression above yields 
$$
\text{Im}\Big[ \int_{\partial B_R}\Big(\overline{u^s} Nu^s+{\partial_r \overline{u^s}}Mu^s\Big) \, \text{d}s\Big]=\text{Im}\Big[ a \big(u^s,u^s)-k^4\int_{B_R}n|u^s|^2\text{d}x \Big]
$$
from which, appealing to our assumption that $\text{Im}[n(x)]\geq C >0$ in the scatterer $D$ and $n=1$ outside the scatterer $\mathbb{R}^2\setminus\overline{D}$, we have
$$
\text{Im}\left[ a\big(u^s,u^s\big)-k^4\int_{B_R}n|u^s|^2\text{d}x \right]= -k^4 \text{Im}\left[ \int_{D}n|u^s|^2\text{d}x \right]\leq 0.
$$
With this fact we can prove the uniqueness result in stages. First we will show that the propagating part of the scattered field, $u_\p$, is zero outside the scatterer.

The decomposition $u^s=u_\e+u_\p$ in $\mathbb{R}^2\setminus\overline{D}$ together with  
$$
(\Delta +k^2)u_\p=0 \quad \text{ and } \quad (\Delta-k^2)u_\e=0\quad\text{in}\hspace{.2cm} \mathbb{R}^2\setminus\overline{D}
$$
imply that that for any $\mu$ 
$$
M u^s = -k^2 u_\p +\mathcal{O} \big(R^{-3/2}\big) \quad \text{ and } \quad N u^s = k^2 \partial_r u_\p +\mathcal{O}\big(R^{-5/2}\big) \quad \text{ on $\partial B_R$ }
$$ 
as $R \to \infty$ (see Section 3 of \cite{biwellposed}). Therefore, the imaginary part of the boundary integral is given by 
\begin{align*}
    & \text{Im}\left[ \int_{\partial B_R}\Big(\overline{u^s} Nu^s+{\partial_r \overline{u^s}} Mu^s \Big)\, \text{d}s \right] \\
    &= -k^2 \text{Im}\left[ \int_{\partial B_R}\Big(u_\p {\partial_r \overline{u_\p}}-\overline{u_\p} {\partial_r u_\p} \Big) \, \text{d}s \right]+\mathcal{O}\big(R^{-1}\big)\\
    &=2k^2\text{Im}\left[\int_{\partial B_R} \overline{u_\p} {\partial_r u_\p} \, \text{d}s  \right]+\mathcal{O}\big(R^{-1}\big)\\
    &= 2k^3\int_{\partial B_R}|u_\p|^2 \, \text{d}s+2k^2\text{Im}\left[\int_{\partial B_R}\Big( {\partial_r u_\p} - \text{i} ku_\p \Big)\overline{u_\p} \, \text{d}s\right]+\mathcal{O}\big(R^{-1}\big).
\end{align*} 
Here, we have used the fact that both $u_\e$ and $\partial_r u_\e$ decay exponentially fast as $R \to \infty$ and the asymptotic behavior of $u_\p$ given by the radiation condition to obtain
$$
\int_{\partial B_R}|u_\p|^2 \, \text{d}s=\mathcal{O}(1)\quad \text{and} \quad \lim_{R \to \infty}\int_{\partial B_R}\left|{\partial_r u_\p}-\text{i} ku_\p\right|^2 \, \text{d}s=0.
$$  
From this, it follows that 
$$
\lim_{R \to \infty} 2k^3\int_{\partial B_R}|u_\p|^2\, \text{d}s = -k^4 \text{Im}\left[ \int_{D}n|u^s|^2\text{d}x \right] < 0.
$$
Therefore, by Rellich's lemma \cite[Theorem 3.5]{approach} the propagating part of the scattered field $u_\p $ must vanish in  $\mathbb{R}^2\setminus\overline{D}$. Notice that, by the above limiting equality, we also obtain that $u^s = 0$ in $D$ since the $u_\p = 0$ in  $\mathbb{R}^2\setminus\overline{D}$ and $\text{Im}[n(x)]\geq C >0$ in $D$. 

Now, we only need to show that the evanescent part of the scattered field $u_\e$ vanishes in $\mathbb{R}^2\setminus\overline{D}$. Using Green's first identity and the fact that $u_\e$ satisfies the modified Helmholtz equation we have
$$
-\int_{B_R\setminus \overline{D}}|\nabla u_\e|^2+k^s|u_\e|^2 \, \text{d}x+\int_{\partial B_R}{{u_\e} \partial_r \overline{u_\e} } \, \text{d}s+\int_{\partial D}{u_\e}{\partial_r\overline{u_\e}}\, \text{d}s=0.
$$ 
The transmission conditions in \eqref{direct3} forcing $u_\e$=0 on $\partial D$ together with the exponential decay of $u_\e$ and $\partial_r u_\e$ imply
$$
\int_{\R^2\setminus \overline{D}}|\nabla u_\e|^2+k^s|u_\e|^2 \, \text{d}x=0.
$$
This proves the claim, since $u_\e=u_\p = 0$ in $\mathbb{R}^2\setminus\overline{D}$ with $u^s=u_\e+u_\p$ in $\mathbb{R}^2\setminus\overline{D}$ and $u^s=0$  in $D$. 
\end{proof}

{We have been able to show that \eqref{direct2}--\eqref{direct4} can have at most one solution, but have not yet proven that a solution indeed exists. This will be addressed in the next section by means of the Fredholm alternative. The techniques used in the proof of Theorem \ref{up=0} are similar to other studies for biharmonic scattering. The main idea is to show that propagating and evanescent parts of the scattered field vanish outside the scatterer. This, together with the ``absorbing obstacle'' condition \eqref{absorbing} imply that the scattered field must also vanish on the interior of the scatterer. The fact that the material must be ``absorbing'' is vital for the above result. A further study of the direct scattering problem is needed to remove this assumption. }

\section{The biharmonic problem is Fredholm}\label{Fred}
In this section, we will address the well-posedness of our scattering problem \eqref{direct2}--\eqref{direct4}. To this end, notice that from the uniqueness result obtained in Theorem \ref{up=0} would imply well-posedness via the Fredholm alternative provided we can show that the scattering problem is Fredholm with index zero. So we wish to show that an equivalent variational formulation of \eqref{direct2}--\eqref{direct4} is associated with an operator that is the sum of a coercive and a compact operator. Toward this goal, an equivalent variational formulation that incorporates the radiation condition \eqref{direct4} will be derived.

Recall that for the scattered field $u^s \in H^2_{loc}(\mathbb{R}^2)$ satisfying \eqref{direct2}--\eqref{direct4} we have the variational formulation \eqref{varform1}, which we restate here for convenience:
{\small \begin{align*}
& a \big(u^s,\varphi \big)-k^4\!\!\int_{B_R}\!\!n u^s\overline{\varphi} \, \text{d}x - \int_{\partial B_R}\!\! \Big(\overline{\varphi}Nu^s+\partial_r \overline{\varphi} Mu^s\Big) \, \text{d}s = k^4\!\!\int_{D}\!\! (n-1)u^i \overline{\varphi}\, \text{d}x \quad \forall\,\varphi \in H^2(B_R)\, \\
& a(u,\varphi):=\int_{B_R}\Big( \mu \Delta u\Delta \overline{\varphi}+(1-\mu ) \sum_{i,j=1}^2{\partial_{x_{i} x_{j}} u} \, {\partial_{x_{i} x_{j}} \overline{\varphi}}\Big) \, \text{d}x\,,
\end{align*} }
where the surface differential operators $M$ and $N$ on $\partial B_R$ are defined in \eqref{defM} and \eqref{defN}.

In order to define an equivalent variational formulation of \eqref{direct2}--\eqref{direct4} we will consider an equivalent scattering problem defined in $B_R$. To do so, we need a way to enforce the radiation condition \eqref{direct4} in the truncated domain. Therefore, we define the  biharmonic Dirichlet-to-Neumann (DtN) operator 
\begin{alignat}{6}
\nonumber
\mathbb{T}: H^{3/2}(\partial B_R) \times H^{1/2}(\partial B_R) &\,\longrightarrow \,&&H^{-3/2}(\partial B_R)\times H^{-1/2}(\partial B_R)\\
\label{DtN}
\begin{pmatrix}
f \\
g
\end{pmatrix} & \, \longmapsto \, &&\begin{pmatrix}
N w|_{\partial B_R} \\
M w|_{\partial B_R}
\end{pmatrix}
\end{alignat}
where $w \in H^2_{loc}(\R^2 \setminus \overline{B}_R)$ is a radiating (i.e. satisfies \eqref{direct4}) solution to   
\begin{alignat}{6}
\label{directW1}
\Delta^2 w-k^4 w \,& =0 \qquad&&  \text{ in }\quad \mathbb{R}^2 \setminus \overline{B}_R \\
\label{directW2A}
w \big|_{\partial B_R} \,&= f \qquad&& \text{ on } \quad \partial B_R\\
\label{directW2B}
\partial_r w\big|_{\partial B_R} \,& = g \qquad&& \text{ on } \quad \partial B_R\,,
\end{alignat}
for any pair $f \in H^{3/2}(\partial B_R)$ and $g \in H^{1/2}(\partial B_R)$. The use of DtN maps to derive equivalent variational formulations for scattering problems is well known---see for instance \cite{approach,IST} for acoustic scattering. This particular DtN operator was studied in great detail in \cite{biwellposed} to prove the well-posedness for a clamped obstacle in the biharmonic context. 

Motivated by these works we see that the scattered field $u^s \in H^2(B_R)$ satisfies 
\begin{align}
\Delta^2 u^s-k^4 n(x)u^s=-k^4 \big(1-n(x)\big) u^i \quad & \text{in} \hspace{.2cm}  B_R  \label{directBR1}
\end{align}
along with the transmission conditions
\begin{align}
 [\![ u^s ]\!] =[\![\partial_\nu u^s ]\!] =[\![\Delta u^s ]\!]  =[\![\partial_\nu \Delta u^s ]\!]  =0 \label{directBR2}
\end{align}
as well as 
\begin{align}\label{directBR3}
\mathbb{T}\begin{pmatrix}
u^s|_{\partial B_R} \\
\partial_r u^s |_{\partial B_R}
\end{pmatrix}=\begin{pmatrix}
N u^s|_{\partial B_R} \\
M u^s|_{\partial B_R}
\end{pmatrix}.
\end{align}
The DtN mapping $\mathbb{T}$ enforces the radiation condition as $R \to \infty$. Notice that we have used the fact that the incident fields are $C^\infty$-smooth functions in the neighborhood of $\partial D$ to write the transmission conditions \eqref{directBR2} in terms of just the scattered field. We then have the following equivalence. 

\begin{theorem} \label{equiv-bvp}
If $u^s$ satisfies the direct scattering problem \eqref{direct2}--\eqref{direct4}, then its restriction to $B_R$ is a solution to \eqref{directBR1}--\eqref{directBR3}. Moreover, the solution to \eqref{directBR1}--\eqref{directBR3} can be extended to $\R^2$ in such a way that the extension satisfies \eqref{direct2}--\eqref{direct4}.
\end{theorem}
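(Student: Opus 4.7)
The plan is to prove the two directions separately, using the fact that the operator $\mathbb{T}$ uniquely characterizes radiating biharmonic extensions outside $B_R$.

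For the forward direction, suppose $u^s$ solves \eqref{direct2}--\eqref{direct4} on $\mathbb{R}^2$. Since $\overline{D}\subset B_R$ and $n\equiv 1$ outside $D$, the restriction of $u^s$ to $B_R$ automatically satisfies \eqref{directBR1} and the transmission conditions \eqref{directBR2}. What requires argument is \eqref{directBR3}. I would observe that the restriction of $u^s$ to $\mathbb{R}^2\setminus \overline{B}_R$ is a radiating $H^2_{loc}$ solution of $\Delta^2 w - k^4 w=0$ with Dirichlet and Neumann traces $f=u^s|_{\partial B_R}$, $g=\partial_r u^s|_{\partial B_R}$. By the well--posedness of the exterior problem \eqref{directW1}--\eqref{directW2B} established in \cite{biwellposed}, the radiating extension of $(f,g)^\top$ is unique, hence the traces $Nu^s$ and $Mu^s$ on $\partial B_R$ agree with the two components of $\mathbb{T}(f,g)^\top$, which is \eqref{directBR3}.

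For the converse, suppose $\tilde u\in H^2(B_R,\Delta^2)$ satisfies \eqref{directBR1}--\eqref{directBR3}. Set $f=\tilde u|_{\partial B_R}$, $g=\partial_r \tilde u|_{\partial B_R}$, let $w$ be the associated radiating solution of \eqref{directW1}--\eqref{directW2B}, and define
\begin{equation*}
u^s(x):= \begin{cases} \tilde u(x), & x\in B_R,\\ w(x), & x\in \mathbb{R}^2\setminus \overline{B}_R. \end{cases}
\end{equation*}
By construction, the Dirichlet and first--order normal traces of $u^s$ agree from both sides of $\partial B_R$, so $u^s\in H^2_{loc}(\mathbb{R}^2)$. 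I would next verify that $u^s$ solves \eqref{direct2} in $\mathcal{D}'(\mathbb{R}^2)$. The equation is immediate in $B_R$ (from \eqref{directBR1}) and in $\mathbb{R}^2\setminus\overline{B}_R$ (where $n\equiv 1$ and $w$ solves \eqref{directW1}), so only the interface $\partial B_R$ has to be checked. Testing $\Delta^2 u^s$ against $\varphi\in C_c^\infty(\mathbb{R}^2)$ and applying the Green's identity used to derive \eqref{varform1} on each of the two subdomains produces interface integrals of the form
\begin{equation*}
\int_{\partial B_R}\bigl(\overline{\varphi}\,\jump{Nu^s} + \partial_r\overline{\varphi}\,\jump{Mu^s}\bigr)\,\text{d}s,
\end{equation*}
interpreted as the appropriate $H^{\pm 3/2}$ and $H^{\pm 1/2}$ dualities. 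The exterior side of each jump equals $\mathbb{T}(f,g)^\top$ by definition of $\mathbb{T}$, while the interior side also equals $\mathbb{T}(f,g)^\top$ by \eqref{directBR3}; hence both jumps vanish and no singular boundary contribution to $\Delta^2 u^s$ survives. The transmission conditions \eqref{direct3} are just \eqref{directBR2}, and the Sommerfeld conditions \eqref{direct4} for both $u^s$ and $\Delta u^s$ are inherited from the radiation condition built into the definition of $w$, via the propagating/evanescent splitting described in \eqref{hemholtzmodified}--\eqref{radiupue}.

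The main obstacle is the interface analysis in the converse direction: one must rigorously justify, within the negative--order Sobolev duality framework described after \eqref{def-varA}, that the simultaneous matching of the low--order traces $(u^s,\partial_r u^s)$ and the high--order traces $(Nu^s,Mu^s)$ across $\partial B_R$ is exactly what is required for $\Delta^2 u^s$ to have no singular support on $\partial B_R$. Once the jump computation is carried out carefully, the remaining verifications are essentially bookkeeping.
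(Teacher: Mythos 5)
Your proposal is correct and follows essentially the same route as the paper: restrict for the forward direction, and for the converse glue the interior solution to the radiating exterior solution of \eqref{directW1}--\eqref{directW2B} with matched Dirichlet and normal traces, invoking the well--posedness of that exterior problem from \cite{biwellposed,novelbi}. Your explicit verification that the DtN condition \eqref{directBR3} forces the jumps of $Nu^s$ and $Mu^s$ to vanish across $\partial B_R$ (so that $\Delta^2 u^s$ has no singular contribution there) is a detail the paper's proof leaves implicit, and it is exactly the right justification.
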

\begin{proof}
To prove the claim, we first let $u^s \in H^2_{loc}(\R^2)$ be a solution to \eqref{direct2}--\eqref{direct4}. It is clear that the restriction to $B_R$ satisfies \eqref{directBR1}--\eqref{directBR2}. Outside of $\overline{B_R}$, $u^s$ satisfies \eqref{directW1}--\eqref{directW2B} with boundary data $f \in H^{3/2}(\partial B_R)$ and $g \in H^{1/2}(\partial B_R)$ given by 
$$
f=u^s \big|_{\partial B_R}  \quad \text{ and } \quad g = \partial_r u^s \big|_{\partial B_R}
$$
along with the radiation condition \eqref{direct4}. This implies that $u^s$ satisfies \eqref{directBR1}--\eqref{directBR3}. Conversely, let $u^s \in H^2(B_R)$ satisfy \eqref{directBR1}--\eqref{directBR3}, then we can extend $u^s$ outside of $\overline{B_R}$ by solving 
$$
\Delta^2 u^s-k^4 u^s=0 \quad  \text{in}\quad \mathbb{R}^2 \setminus \overline{B}_R
$$
with boundary conditions 
$$
u^s_+ \big|_{\partial B_R} = u^s_- \big|_{\partial B_R}  \quad \text{ and } \quad \partial_r u^s_+  \big|_{\partial B_R} = \partial_r u^s_-  \big|_{\partial B_R}
$$
along with the radiation condition. Since the above boundary value problem is well-posed by \cite{biwellposed,novelbi}, we have extended $u^s$ into all of $H^2_{loc}(\R^2)$ such that it satisfies \eqref{direct2}--\eqref{direct4}, proving the claim. 
\end{proof} 

With the equivalence result in Theorem \ref{equiv-bvp}, the variational formulation \eqref{varform1} can be augmented to be equivalent to the direct scattering problem \eqref{direct2}--\eqref{direct4}. Therefore, the equivalent variational formulation is to find $u^s \in H^2(B_R)$ such that 
\begin{align}
a \big(u^s,\varphi \big)-k^4\int_{B_R}n u^s\overline{\varphi} \, \text{d}x - \int_{\partial B_R}\begin{pmatrix}
\varphi \\
\partial_{r}\varphi 
\end{pmatrix}^*\mathbb{T}\begin{pmatrix}
u^s \\
\partial_{r}u^s
\end{pmatrix}ds 
= k^4\int_{D} (n-1)u^i \overline{\varphi}\, \text{d}x \label{varform2}
\end{align}
for all $\varphi \in H^2(B_R)$. Here, the surface differential operators in the integral on $\partial B_R$ have been replaced with the DtN operator where the exponent ``$*$'' stands for the conjugate transpose. Again, this is equivalent to the direct scattering problem \eqref{direct2}--\eqref{direct4} by Theorem \ref{equiv-bvp} since the DtN operator is used to impose the radiation condition. In order to proceed, we need the following result for the DtN operator proven in Section 5 of \cite{biwellposed}. 
 
\begin{lemma}\label{DtN-lemma}
Let the DtN operator 
$$
\mathbb{T}: H^{3/2}(\partial B_R) \times H^{1/2}(\partial B_R) \longrightarrow H^{-3/2}(\partial B_R)\times H^{-1/2}(\partial B_R)
$$
be as defined in \eqref{DtN}. Then we have that 
$$
\int_{\partial B_R}\begin{pmatrix}
\varphi_2 \\
\partial_{r}\varphi_2 
\end{pmatrix}^*\mathbb{T}\begin{pmatrix}
\varphi_1 \\
\partial_{r}\varphi_1
\end{pmatrix}ds=t_0(\varphi_1,\varphi_2)+t_{\infty}(\varphi_1,\varphi_2) \quad \text{ for any } \,\, \varphi_1,\varphi_2 \in H^2(B_R)
$$
where the sesquilinear form $t_0 \big( \cdot \, , \, \cdot \big)$ is represented by a finite rank operator and the sesquilinear form $t_{\infty} \big( \cdot \, , \, \cdot \big)$ satisfies 
$$
-\mathrm{Re}\left\{ t_{\infty} (\varphi,\varphi) \right\} \geq 0  \quad \text{ for all } \,\, \varphi \in H^2(B_R). 
$$ 
\end{lemma}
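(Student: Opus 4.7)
The plan is to diagonalize $\mathbb{T}$ by separation of variables in polar coordinates on the disk $B_{R}$. Using the factorization $\Delta^{2}-k^{4}=(\Delta-k^{2})(\Delta+k^{2})$ exactly as in Section \ref{direct-prob-LDSM}, any radiating $H^{2}_{loc}$--solution $w$ of the exterior problem \eqref{directW1}--\eqref{directW2B} splits as $w=w_{\p}+w_{\e}$, with $w_{\p}$ satisfying Helmholtz together with the Sommerfeld condition \eqref{radiupue} and $w_{\e}$ satisfying the modified Helmholtz equation with the exponential decay already used in the proof of Theorem \ref{up=0}. Expanding in the angular variable then gives
\begin{equation*}
w(r,\theta)=\sum_{\ell\in\Z}\Big(a_{\ell}H^{(1)}_{\ell}(kr)+b_{\ell}H^{(1)}_{\ell}(\text{i}kr)\Big)\text{e}^{\text{i}\ell\theta},
\end{equation*}
and the Dirichlet pair $(f,g)\in H^{3/2}(\partial B_{R})\times H^{1/2}(\partial B_{R})$ determines $(a_{\ell},b_{\ell})$ uniquely via the invertible $2\times 2$ linear system obtained by evaluating $w$ and $\partial_{r}w$ at $r=R$.

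Next I would insert this expansion into the definitions \eqref{defM}--\eqref{defN} to read off the Fourier symbols of $Mw|_{\partial B_{R}}$ and $Nw|_{\partial B_{R}}$, producing a $2\times 2$ matrix $\widehat{\mathbb{T}}_{\ell}$ acting on the Fourier coefficients of $(f,g)$. By Parseval's identity, the sesquilinear form in the statement then reduces to the diagonal series
\begin{equation*}
\int_{\partial B_{R}}\begin{pmatrix}\varphi_{2}\\ \partial_{r}\varphi_{2}\end{pmatrix}^{\!*}\!\mathbb{T}\begin{pmatrix}\varphi_{1}\\ \partial_{r}\varphi_{1}\end{pmatrix}\mathrm{d}s=2\pi R\sum_{\ell\in\Z}\begin{pmatrix}\alpha_{2,\ell}\\ \beta_{2,\ell}\end{pmatrix}^{\!*}\widehat{\mathbb{T}}_{\ell}\begin{pmatrix}\alpha_{1,\ell}\\ \beta_{1,\ell}\end{pmatrix},
\end{equation*}
where $\alpha_{j,\ell}$ and $\beta_{j,\ell}$ are the Fourier coefficients of $\varphi_{j}|_{\partial B_{R}}$ and $\partial_{r}\varphi_{j}|_{\partial B_{R}}$ respectively. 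This recasts the whole problem as the analysis of a sequence of $2\times 2$ matrices.

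The splitting $\mathbb{T}=t_{0}+t_{\infty}$ then comes from cutting this series at a threshold $|\ell|\leq\ell_{0}$. The truncated low--frequency piece acts on a finite--dimensional subspace of the trace space and defines $t_{0}$, which is automatically of finite rank (in particular compact) on $H^{3/2}(\partial B_{R})\times H^{1/2}(\partial B_{R})$. For the high--frequency tail, the plan is to invoke the large--order Hankel asymptotics
\begin{equation*}
\frac{(H^{(1)}_{\ell})'(z)}{H^{(1)}_{\ell}(z)}=-\frac{|\ell|}{z}\bigl(1+o(1)\bigr)\qquad\text{as }|\ell|\to\infty,
\end{equation*}
valid simultaneously at $z=kR$ and $z=\text{i}kR$, and to substitute them into the closed--form expression for $\widehat{\mathbb{T}}_{\ell}$. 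A direct inspection then shows that the matrix $-\mathrm{Re}\,\widehat{\mathbb{T}}_{\ell}$ is positive semidefinite for $|\ell|$ large, the coercivity being driven by the leading $|\ell|^{3}/R^{3}$ diagonal contribution; summing the tail $|\ell|>\ell_{0}$ gives the desired sign condition on $t_{\infty}$.

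The main obstacle will be the bookkeeping in this last step: one must invert the Dirichlet $2\times 2$ system in closed form, track the cancellations between the propagating contribution $H^{(1)}_{\ell}(kR)$ and the evanescent contribution $H^{(1)}_{\ell}(\text{i}kR)$ (the latter growing factorially in $|\ell|$ while oscillating in the former is absent), and verify uniformly in $|\ell|>\ell_{0}$ that the real part of the resulting quadratic form has the correct sign. Once this asymptotic positivity of $-\mathrm{Re}\,\widehat{\mathbb{T}}_{\ell}$ is quantified, the decomposition into the finite--rank $t_{0}$ and the bounded, sign--definite $t_{\infty}$ on $H^{3/2}(\partial B_{R})\times H^{1/2}(\partial B_{R})$ follows routinely from Parseval and the standard characterization of fractional Sobolev norms on the circle by weighted $\ell^{2}$ norms of Fourier coefficients.
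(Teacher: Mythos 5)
Your plan follows exactly the route the paper takes (and attributes to Section 5 of \cite{biwellposed}): separation of variables to obtain explicit $2\times 2$ Fourier symbols $\mathbb{T}_m$ for the DtN map, truncation of the resulting diagonal series at a threshold index $m_0$ to produce the finite--rank part $t_0$, and large--order Hankel asymptotics to show that $-\mathrm{Re}\,\mathbb{T}_m$ is positive semidefinite for $|m|>m_0$, which yields the sign condition on $t_\infty$. Like the paper itself, which simply cites the reference for this lemma, you defer the decisive computation --- inverting the $2\times 2$ Dirichlet system in closed form and verifying the sign of $\mathrm{Re}\,\mathbb{T}_m$ uniformly for large $|m|$ --- so the proposal is a correct outline of the standard argument rather than a self--contained proof.
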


Just as in the acoustic scattering case, the above result is obtained by appealing to separation of variables. This gives a way to derive an analytical formula for the DtN operator $\mathbb{T}$ which can then be analyzed explicitly. Indeed, it has been shown \cite{biwellposed} that there exists an $m_0 \in \N$ such that  
\begin{align*}
t_0(\varphi_1,\varphi_2)\,&=\frac{1}{R}\sum_{|m|\leq m_0}\begin{pmatrix}
\varphi_2^{(m)} \\
\partial_{r}\varphi_2^{(m)}
\end{pmatrix}^*\mathbb{T}_m \begin{pmatrix}
\varphi_1^{(m)} \\
\partial_{r}\varphi_1^{(m)}
\end{pmatrix}\\[1ex]
t_{\infty}(\varphi_1,\varphi_2) \,&=\frac{1}{R}\sum_{|m|> m_0}\begin{pmatrix}
\varphi_2^{(m)} \\
\partial_{r}\varphi_2^{(m)}
\end{pmatrix}^*\mathbb{T}_m \begin{pmatrix}
\varphi_1^{(m)} \\
\partial_{r}\varphi_1^{(m)}
\end{pmatrix}
\end{align*}
where $\varphi_j^{(m)}$ and $\partial_{r}\varphi_j^{(m)}$ are the Fourier coefficients for  $\varphi_j (R ,\theta)$ and $\partial_{r}\varphi_j (R ,\theta)$ for $j=1,2$, and the matrix $\mathbb{T}_m \in \C^{2\times 2}$ can be obtained explicitly and has a negative semidefinite real part for all $|m|> m_0$. 

In order to conclude that the equivalent variational formulation \eqref{varform2} is Fredholm with index zero we need a result pertaining to the sesquilinear form
$$
a(u,\varphi) :=\int_{B_R}\Big( \mu \Delta u\Delta \overline{\varphi}+(1-\mu ) \sum_{i,j=1}^2{\partial_{x_{i} x_{j}} u} \, {\partial_{x_{i} x_{j}} \overline{\varphi}}\Big) \, \text{d}x
$$
for a fixed constant $\mu$. Since it only includes the $L^2$ inner-product terms for the 2nd order derivatives, it is clear that $a \big( \cdot \, , \, \cdot \big)$ is not coercive on $H^2(B_R)$. However, as proven in \cite{LonEBVP}, for $\mu \in (-3,1)$ it does satisfy the classical G\r{a}rding inequality and is therefore weakly coercive (see \cite[Definition 6.65]{Salsa}).

\begin{lemma} \label{Garding}
For any $\mu\in (-3,1)$, there exists two constants $c_0>0$ and $\lambda_0 \geq 0 $ such that the sesquiliinear form $a\big( \cdot \, , \, \cdot \big)$ defined in \eqref{def-varA} satisfies the G\r{a}rding inequality 
$$
a(u,u)+\lambda_0\left\|u\right\|^2_{L^2(B_R)}\geq c_0\left\|u\right\|^2_{H^2(B_R)} \quad \text{for all }\quad u\in H^2(B_R).
$$
\end{lemma}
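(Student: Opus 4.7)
My plan is to recast $a(u,u)$ as the $H^2$--seminorm $|u|_{H^2(B_R)}^2 := \sum_{i,j=1}^{2}\|\partial_{x_{i}x_{j}}u\|_{L^2(B_R)}^2$ plus a boundary correction of lower order in $u$, and then absorb that correction using the $L^2$--mass term $\lambda_0\|u\|_{L^2(B_R)}^2$. The form $a$ cannot be coercive on $H^2(B_R)$ on its own, since it vanishes on the finite--dimensional subspace of affine functions, so a lower--order perturbation is inevitable; the essential point is that all of the $\mu$--dependence concentrates in a manifestly lower--order boundary term.

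The key algebraic input is the two--dimensional Monge--Amp\`ere--type divergence identity
\[
2\bigl[\partial_{x_1 x_1}u\,\overline{\partial_{x_2 x_2}u}-|\partial_{x_1 x_2}u|^2\bigr]=\text{div}\,\mathbf{F}(u),
\]
with $\mathbf{F}(u)$ bilinear in the first and second derivatives of $u$, combined with the pointwise relation
\[
|\Delta u|^2-\sum_{i,j=1}^{2}|\partial_{x_{i}x_{j}}u|^2 = 2\,\text{Re}\bigl(\partial_{x_1 x_1}u\,\overline{\partial_{x_2 x_2}u}\bigr)-2|\partial_{x_1 x_2}u|^2.
\]
Applying the divergence theorem, first on $C^\infty(\overline{B_R})$ and then extending to $H^2(B_R)$ by density, yields
\[
\int_{B_R}|\Delta u|^2\,\text{d}x = \int_{B_R}\sum_{i,j=1}^{2} |\partial_{x_{i}x_{j}}u|^2\,\text{d}x+\int_{\partial B_R}\mathcal{B}(u)\,\text{d}s,
\]
where $\mathcal{B}(u)$ is a sesquilinear boundary form which, after a tangential integration by parts along the circle $\partial B_R$, can be expressed as a duality pairing between traces of first--order derivatives of $u$ (in $H^{1/2}(\partial B_R)$) and tangential derivatives thereof (in $H^{-1/2}(\partial B_R)$), so that the pairing remains well--defined for every $u\in H^2(B_R)$. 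Substituting into the definition of $a(u,u)$ collapses the convex combination in $\mu$ and produces the clean decomposition
\[
a(u,u)=|u|_{H^2(B_R)}^2+\mu\int_{\partial B_R}\mathcal{B}(u)\,\text{d}s.
\]

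With this decomposition at hand the rest is routine. An Ehrling--type interpolation, invoking the compactness of the embedding $H^2(B_R)\hookrightarrow H^{2-\epsilon}(B_R)$ together with the trace theorem, gives $\left|\int_{\partial B_R}\mathcal{B}(u)\,\text{d}s\right|\le \delta\|u\|_{H^2(B_R)}^2+C_\delta\|u\|_{L^2(B_R)}^2$ for any $\delta>0$. Coupling this with the standard bound $\|u\|_{H^2(B_R)}^2\le C\bigl(|u|_{H^2(B_R)}^2+\|u\|_{L^2(B_R)}^2\bigr)$ (valid because the kernel of the seminorm is finite--dimensional) and choosing $\delta$ small depending on $|\mu|$, with $\lambda_0$ correspondingly large, yields $a(u,u)+\lambda_0\|u\|_{L^2(B_R)}^2\ge c_0\|u\|_{H^2(B_R)}^2$ for some $c_0>0$ throughout the stated range $\mu\in(-3,1)$. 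The principal obstacle is producing the boundary identity in a form that is well--defined on $H^2(B_R)$: since $\partial_{x_i x_j}u$ does not have a pointwise $L^2$--trace on $\partial B_R$, the form $\mathcal{B}(u)$ must be recast through tangential differentiation so that each factor lives in $H^{\pm 1/2}(\partial B_R)$, at which point the duality pairing and the trace theorem can be safely applied.
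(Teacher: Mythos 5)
The paper never proves Lemma \ref{Garding} itself; it simply cites Agmon \cite{LonEBVP}, so your argument has to stand on its own, and unfortunately it does not. Your opening decomposition is correct: writing $|u|^2_{H^2,\mathrm{semi}}:=\sum_{i,j=1}^2\|\partial_{x_ix_j}u\|^2_{L^2(B_R)}$, the Monge--Amp\`ere divergence identity does give $\|\Delta u\|^2_{L^2(B_R)}=|u|^2_{H^2,\mathrm{semi}}+\int_{\partial B_R}\mathcal B(u)\,\mathrm{d}s$ and hence $a(u,u)=|u|^2_{H^2,\mathrm{semi}}+\mu\int_{\partial B_R}\mathcal B(u)\,\mathrm{d}s$. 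The fatal step is the claim that $\big|\int_{\partial B_R}\mathcal B(u)\,\mathrm{d}s\big|\le\delta\|u\|^2_{H^2(B_R)}+C_\delta\|u\|^2_{L^2(B_R)}$ for \emph{every} $\delta>0$. This is false: take $u_N=r^N\cos(N\theta)$, harmonic, normalized in $L^2(B_R)$. Then $\int_{\partial B_R}\mathcal B(u_N)\,\mathrm{d}s=\|\Delta u_N\|^2_{L^2}-|u_N|^2_{H^2,\mathrm{semi}}=-|u_N|^2_{H^2,\mathrm{semi}}$, and $|u_N|^2_{H^2,\mathrm{semi}}\sim N^4\to\infty$ is the dominant part of $\|u_N\|^2_{H^2}$. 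The boundary form is therefore of \emph{full} order, not lower order, and no $\delta<1$ can work. The technical reason Ehrling fails is that the pairing $\langle\partial_\tau(\nabla u),\nabla u\rangle_{H^{-1/2}\times H^{1/2}}$ needs the full $H^{1/2}(\partial B_R)$ norm of $\nabla u|_{\partial B_R}$ in \emph{both} factors; that norm is controlled only by $\|u\|_{H^2(B_R)}$ and not by any $\|u\|_{H^{2-\epsilon}(B_R)}$, so the form is not continuous on $H^{2-\epsilon}(B_R)\times H^2(B_R)$ and the compact-embedding argument does not apply. Your tangential integration by parts makes the pairing well defined, but well-definedness is not compactness.

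A sanity check you missed: your argument nowhere uses $\mu\in(-3,1)$ and would prove the G\r{a}rding inequality for every real $\mu$, yet the same harmonic sequence destroys it for $\mu\ge1$ (for $\mu=2$ one gets $a(u_N,u_N)=-|u_N|^2_{H^2,\mathrm{semi}}\to-\infty$ while $\|u_N\|_{L^2}=1$). Any correct proof must exploit the restriction on $\mu$. For $\mu\in[0,1)$ the pointwise bound $(\mathrm{tr}\,M)^2\le2|M|^2$ for symmetric $2\times2$ matrices gives $\mu|\Delta u|^2+(1-\mu)\sum_{i,j}|\partial_{x_ix_j}u|^2\ge(1-\mu)\sum_{i,j}|\partial_{x_ix_j}u|^2$ pointwise, and the lemma follows from $\|u\|^2_{H^2}\le C\big(|u|^2_{H^2,\mathrm{semi}}+\|u\|^2_{L^2}\big)$; for $\mu<0$ the same pointwise argument only reaches $\mu>-1$, and extending to the stated range $\mu>-3$ requires a sharper \emph{integrated} comparison of the type $\|\Delta u\|^2_{L^2}\le\tfrac43|u|^2_{H^2,\mathrm{semi}}+\delta\|u\|^2_{H^2}+C_\delta\|u\|^2_{L^2}$ on all of $H^2(B_R)$ (note $4/3$ is exactly the constant that makes $\mu=-3$ the endpoint). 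Establishing that quantitative control of the boundary term relative to the seminorm is the real content of the cited result in \cite{LonEBVP} (or of the mode-by-mode analysis in \cite{biwellposed}), and it is the ingredient missing from your proposal.
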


With these lemmas, we have all that we need to prove that the direct scattering problem \eqref{direct2}--\eqref{direct4} is Fredholm. This also implies well-posedness by appealing to the uniqueness result given in the previous section. We now state the main result of the paper. 

\begin{theorem} \label{wellposedness}
The direct scattering problem \eqref{direct2}--\eqref{direct4} is Fredholm of index zero and is therefore well-posed. Moreover, the scattered field satisfies the estimate 
$$
\| u^s \|_{H^2(B_R)} \leq C\| u^i \|_{L^2(D)}
$$
for some fixed constant $C>0$. 
\end{theorem}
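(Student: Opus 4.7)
The plan is to recast the equivalent variational formulation \eqref{varform2} as an operator equation of the form $\mathcal{A} u^s = F$ on $H^2(B_R)$, decompose $\mathcal{A}$ as the sum of a coercive piece and a compact perturbation to get Fredholm index zero, and then combine this with the uniqueness result of Theorem \ref{up=0} to get invertibility and the norm estimate.

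First I would define the sesquilinear form
\begin{equation*}
B(u,\varphi) := a(u,\varphi) - k^4\int_{B_R} n\, u\,\overline{\varphi}\, \text{d}x - t_0(u,\varphi) - t_\infty(u,\varphi),
\end{equation*}
so that \eqref{varform2}, via Lemma \ref{DtN-lemma}, amounts to $B(u^s,\varphi) = F(\varphi)$ for all $\varphi\in H^2(B_R)$, where $F(\varphi) := k^4\int_{D}(n-1)u^i\overline{\varphi}\,\text{d}x$. By Riesz representation, this corresponds to an equation $\mathcal{A}u^s = f$ in $H^2(B_R)$, with $\mathcal{A}$ bounded. I would then split $B = B_1 + B_2$ with $\mu\in (-3,1)$ fixed and $\lambda_0$ from Lemma \ref{Garding}, via
\begin{equation*}
B_1(u,\varphi) := a(u,\varphi) + \lambda_0\int_{B_R} u\overline{\varphi}\,\text{d}x - t_\infty(u,\varphi),\qquad B_2(u,\varphi) := B(u,\varphi) - B_1(u,\varphi).
\end{equation*}

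The key step is coercivity of $B_1$. Since $a(u,u)$ is real, Lemma \ref{Garding} gives $a(u,u)+\lambda_0\|u\|_{L^2(B_R)}^2 \geq c_0\|u\|_{H^2(B_R)}^2$, and Lemma \ref{DtN-lemma} yields $-\text{Re}\{t_\infty(u,u)\}\geq 0$. Adding these,
\begin{equation*}
\text{Re}\{B_1(u,u)\} \geq c_0 \|u\|_{H^2(B_R)}^2,
\end{equation*}
so by Lax--Milgram the operator $\mathcal{A}_1$ associated with $B_1$ is a bijective isomorphism. Next I would argue that the operator $\mathcal{K}$ associated with $B_2$ is compact: the terms involving $\int_{B_R}(k^4 n+\lambda_0)u\overline{\varphi}\,\text{d}x$ are compact because the embedding $H^2(B_R)\hookrightarrow L^2(B_R)$ is compact by Rellich--Kondrachov, while $t_0$ is finite rank by Lemma \ref{DtN-lemma} and hence compact. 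Therefore $\mathcal{A}=\mathcal{A}_1+\mathcal{K}$ is a compact perturbation of an isomorphism, so it is Fredholm of index zero.

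At this point, Theorem \ref{up=0} and the equivalence established in Theorem \ref{equiv-bvp} show that $\mathcal{A}$ has trivial kernel: any $u\in H^2(B_R)$ with $\mathcal{A}u=0$ corresponds to a solution of \eqref{direct2}--\eqref{direct4} with $u^i=0$, which must vanish. Since $\mathcal{A}$ is Fredholm of index zero, injectivity implies surjectivity and bounded invertibility. Hence \eqref{varform2} has a unique solution for any incident field, and by Theorem \ref{equiv-bvp} the same holds for \eqref{direct2}--\eqref{direct4}. Finally, the antilinear functional $F$ satisfies $|F(\varphi)|\leq k^4\|n-1\|_{L^\infty(D)}\|u^i\|_{L^2(D)}\|\varphi\|_{L^2(B_R)}$, so $\|F\|_{(H^2(B_R))^*}\leq C'\|u^i\|_{L^2(D)}$, and the continuity of $\mathcal{A}^{-1}$ delivers the stated estimate.

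I expect the main obstacle to be bookkeeping: making sure the decomposition $B=B_1+B_2$ actually lands on the coercive/compact split, and in particular that the $\lambda_0$ shift used to invoke G\r{a}rding gets canceled on the compact side and does not contaminate $B_1$. The other subtle point is verifying that $t_\infty$ contributes nonnegatively to $\text{Re}\{B_1(u,u)\}$ (it enters with a minus sign in $B$, so $-t_\infty$ appears in $B_1$, which is why the sign condition in Lemma \ref{DtN-lemma} is the one we need); once this is handled, everything else is standard Fredholm-alternative machinery.
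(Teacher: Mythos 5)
Your proposal is correct and follows essentially the same route as the paper: the same splitting of the variational form into the coercive part $a(u,\varphi)+\lambda_0\int_{B_R}u\overline{\varphi}\,\text{d}x - t_\infty(u,\varphi)$ (handled by Lemma \ref{Garding} and the sign condition on $t_\infty$ in Lemma \ref{DtN-lemma}) plus the compact part $-\int_{B_R}(k^4 n+\lambda_0)u\overline{\varphi}\,\text{d}x - t_0(u,\varphi)$ (compact embedding plus finite rank), followed by the Fredholm alternative with Theorem \ref{up=0} and the bound on the antilinear functional. No gaps; this matches the paper's argument.
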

\begin{proof}
To prove the claim, we consider the equivalent variational formulation \eqref{varform2}. From this we see that we can split the sesquilinear form such that 
$$
b(u^s,\varphi)=a(u^s,\varphi)+\lambda_0 \int_{B_R} u^s\overline{\varphi} \, \text{d}x-t_{\infty}(u^s,\varphi)
$$
and
$$
c(u^s,\varphi)=-\int_{B_R}(k^4n + \lambda_0)u^s\overline{\varphi} \, \text{d}x-t_0(u^s,\varphi).
$$
From the compact embedding of $H^2(B_R)$ into $L^2(B_R)$ \cite[Theorem 7.9]{Salsa} and the fact that $t_0 \big( \cdot \, , \, \cdot \big)$ is represented by a finite rank operator, it is clear that $c \big( \cdot \, , \, \cdot \big)$ is represented by a compact operator. Now, for $b \big( \cdot \, , \, \cdot \big)$ and for some fixed $\mu \in (-3,1)$, the G\r{a}rding inequality gives 
$$
\text{Re}\left\{b(u^s,u^s)\right\}=a(u^s,u^s)+\lambda_0\left \|u\right\|^2_{L^2(B_R)} -\text{Re}\left\{t_{\infty}(u^s,u^s)\right\}\geq c_0\left\|u^s\right\|^2_{H^2(B_R)}\,.
$$
Therefore, we have obtained that $b \big( \cdot \, , \, \cdot \big)$ is coercive on $H^2(B_R)$. This proves that the problem \eqref{direct2}--\eqref{direct4} is Fredholm of index zero. Moreover, by the conjugate linear functional given in the variational formulation \eqref{varform2} we see that 
$$
\left|k^4 \int_{D}(n-1)u^i \overline{\varphi} \, \text{d}x \right|  \leq  C \|u^i \|_{L^2(D)}\|\varphi \|_{H^2(B_R)}.
$$
This, combined with the Fredholm alternative and the uniqueness result, gives the well-posedness along with the norm estimate. 
\end{proof}

With this, we have proven that our scattering problem \eqref{direct2}--\eqref{direct4} is well-posed. This result uses techniques similar to the ones used for acoustic scattering problems but also have some new challenges. Indeed, even though the uniqueness result is handled in a somewhat similar manner as for acoustic scattering problems, we see that the ``absorbing'' assumption is necessary for our analysis but this is not the case for acoustic problems. It could be possible to use the limiting absorption principle (see e.g. \cite{kirsch-LAP} for analytical details) to remove the assumption on the imaginary part of $n$. This is indicative of the analytical challenges that come with studying biharmonic scattering problems. 

\section{Reciprocity Properties}\label{recip-rel}
Now that the well-posedness of the direct scattering problem has been established, we turn our attention to studying the properties of the solution. In this section, we derive two important reciprocity relations for the scattered field $u^s$. These are well known for acoustic scattering but have not been studied for the biharmonic scattering problem. These identities are used in inverse scattering to study the corresponding far-field (see \cite{HLL-biharmonic}) or near-field (see \cite{lsmKIRCH}) operators which are useful in the study of qualitative methods for recovering the scatterer $D$ from the measured scattering data. 

In order to study the reciprocity relations, we need to derive an integration by parts formula. By appealing to Green's second identity we obtain
\begin{align}\label{IBP}
\int_{\Omega} \paren{ {v} \Delta^2 u  -u\Delta^2{v}}  \, \text{d} x 
 = \int_{\partial \Omega} \paren{  \partial_\nu u \Delta {v} - u  \partial_\nu \Delta{v} 
 + {v}  \partial_\nu \Delta u  - \Delta u \partial_\nu {v} } \, \text{d} s
\end{align}
for any $u,v \in H^2(\Omega , \Delta^2)$ where $\partial \Omega$ is $C^2$-smooth and the Hilbert space was defined in \eqref{hilbertspace}. Notice that \eqref{IBP} is similar to the integration formula used in the previous section for $\mu=0$ and $\Omega=B_R$. 

With the aid of this formula, we will prove the reciprocity relationship for near-field data where we take
$$
u^i(x, {z})= \mathbb{G}(x,z;k) \quad \text{ for } \;y \in \Gamma
$$
and $\mathbb G(x,z;k)$ defined as in \eqref{fundamentalsol}. Recall that the scatterer $D$ is complactly enclosed by the measurement curve $\Gamma$, so that $\overline{D} \subset \text{int}(\Gamma)$ with $\Gamma$, and that the fundamental solution satisfies 
$$
\Delta^2 \mathbb{G}(\cdot,z;k) -k^4 \mathbb{G}(\cdot,z;k) = -\delta(\cdot - z) \quad \text{ in $\R^2$}
$$
along with the radiation condition \eqref{direct4}. With this we can now give our first reciprocity result for the scattered field given by a point source incident field. 

\begin{theorem}\label{firstreciprocity}
The scattered field $u^s$ solving \eqref{direct2}--\eqref{direct4} with incident field $u^i(\cdot , {z})= \mathbb{G}(\cdot ,z;k)$ satisfies the reciprocity relation $u^s(x,z)=u^s(z,x)$ for all $x,z\in\Gamma.$
\end{theorem}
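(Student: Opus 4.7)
The plan is to apply the biharmonic Green's identity \eqref{IBP} to the two total fields $u_j(\cdot) := \mathbb{G}(\cdot,y_j;k) + u^s(\cdot,y_j)$ generated by point sources at $y_1 := z$ and $y_2 := x$, working in a large ball $B_R$ that has been punctured at both source points. As a preliminary check on the applicability of \eqref{IBP}, note that since $x,z \in \Gamma$ lie outside $\overline{D}$, each incident field $\mathbb{G}(\cdot,y_j;k)$ is $C^{\infty}$ across $\partial D$, so the transmission conditions in \eqref{direct3} satisfied by $u^s$ transfer to the total fields. Hence each $u_j$ belongs to $H^2_{\mathrm{loc}}(\R^2 \setminus \{y_j\})$, crosses $\partial D$ with all four traces continuous, and solves the distributional equation $\Delta^2 u_j - k^4 n u_j = -\delta_{y_j}$ in $\R^2$ together with the radiation conditions \eqref{direct4}.

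Fix $\epsilon > 0$ small enough that $\overline{B_\epsilon(x)}$ and $\overline{B_\epsilon(z)}$ are mutually disjoint and disjoint from $\overline{D} \cup \partial B_R$, and put $\Omega_{R,\epsilon} := B_R \setminus (\overline{B_\epsilon(x)} \cup \overline{B_\epsilon(z)})$. Inserting the two total fields into \eqref{IBP} on $\Omega_{R,\epsilon}$, the Dirac sources are excised and the volume integrand reduces to $k^4 n(u_2 u_1 - u_1 u_2) = 0$. Consequently the three boundary contributions---on $\partial B_R$, $\partial B_\epsilon(x)$ and $\partial B_\epsilon(z)$---must sum to zero, and the reciprocity identity will follow by analyzing each piece in the appropriate limit.

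Letting $R \to \infty$ on $\partial B_R$, I would use the propagating/evanescent decomposition $u_j = u_{j,\p} + u_{j,\e}$ inherited from the splitting introduced in Section \ref{direct-prob-LDSM} (noting that $\mathbb{G}$ itself splits, with $\mathbb{G}_{\p} \propto H^{(1)}_0(k r)$ and $\mathbb{G}_{\e} \propto H^{(1)}_0(\text{i}k r)$). Because $\Delta u_{j,\p} = -k^2 u_{j,\p}$, the four-term integrand in \eqref{IBP} collapses on the propagating parts to the classical Helmholtz Wronskian $2k^2 \paren{u_{1,\p} \pdr u_{2,\p} - u_{2,\p} \pdr u_{1,\p}}$, whose integral over $\partial B_R$ vanishes in the limit through the standard argument exploiting \eqref{radiupue}. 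The evanescent contributions and the propagating/evanescent cross terms vanish in $R$ as well, thanks to the exponential decay of $u_{j,\e}$ and $\pdr u_{j,\e}$. Letting $\epsilon \to 0$ on the inner spheres, $u_2$ is regular near $z$ while $u_1$ inherits the singularity of $\mathbb{G}(\cdot,z;k)$; a short computation using the near-source expansion $\Delta \mathbb{G}(\cdot,z;k) \sim -\tfrac{1}{2\pi}\log|\cdot - z|$ (derivable from \eqref{fundamentalsol} and the factorization $(\Delta + k^2)\mathbb{G} \propto K_0$) shows that only the term $u_2 \pdr \Delta u_1$ survives, producing $-u_2(z)$. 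Symmetrically the $\partial B_\epsilon(x)$ integral produces $u_1(x)$, and collecting the three limits yields $u_1(x) = u_2(z)$, i.e. $\mathbb{G}(x,z;k) + u^s(x,z) = \mathbb{G}(z,x;k) + u^s(z,x)$. The symmetry $\mathbb{G}(x,z;k) = \mathbb{G}(z,x;k)$ built into \eqref{fundamentalsol} then delivers the claim.

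The step I expect to demand the most care is the bookkeeping at infinity: unlike the acoustic setting where a single Sommerfeld identity does the job, \eqref{IBP} involves four distinct boundary terms with up to third-order normal derivatives, so the propagating/evanescent decomposition must be propagated through each term individually, and one must verify that the cross terms (whose exponential decay is only in one factor) still vanish. Identifying which of the four boundary terms survives on each small sphere is slightly less routine than in the acoustic case as well, but reduces to a direct calculation based on the behavior of $\mathbb{G}$ and its Laplacian near the singularity.
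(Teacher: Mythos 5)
Your proposal is correct, and it reorganizes the paper's argument in a genuinely different (and arguably more economical) way. The paper never punctures the domain explicitly: it first derives representation formulas \eqref{us1}--\eqref{us2} for $u^s(x,z)$ and $u^s(z,x)$ as integrals over $\partial D$ (the extraction of the point value being absorbed into those formulas), then adds two auxiliary Wronskian-type identities, \eqref{us3} for the pair of scattered fields in $B_R\setminus\overline{D}$ and \eqref{us4} for the pair of fundamental solutions in $D$, and closes by applying \eqref{IBP} once more to the two total fields inside $D$. You instead apply \eqref{IBP} a single time, to the two total fields on $B_R$ punctured at both source points: the transmission conditions \eqref{direct3} make $\partial D$ invisible, the volume term cancels because both total fields solve $\Delta^2 u - k^4 n u=0$ away from the sources, the boundary at infinity dies through the $u_\p/u_\e$ splitting and \eqref{radiupue}, and the two small circles return the point values of the total fields, after which the symmetry of $\mathbb{G}$ converts total-field reciprocity into the claim. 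The ingredients are identical --- \eqref{IBP}, the radiation condition, the exponential decay of the evanescent parts, the symmetry of $\mathbb{G}$ --- but your version trades the paper's $\partial D$ bookkeeping for an explicit local analysis at the sources, and your identification of the single surviving term $u\,\partial_\nu\Delta\mathbb{G}$ is right ($\mathbb{G}$ and $\nabla\mathbb{G}$ are bounded near the singularity and $\Delta\mathbb{G}$ is only logarithmic, so the other three terms are $O(\epsilon)$ or $O(\epsilon\log\epsilon)$). The two points to spell out in a full write-up are the ones you already flag: the cross terms on $\partial B_R$ vanish because one exponentially decaying factor dominates the $O(R^{1/2})$ growth of the remaining product times arclength, and the propagating--propagating Wronskian vanishes by the standard two-radiating-solutions argument applied to the total (not just scattered) fields, which is legitimate since $\mathbb{G}(\cdot,y_j;k)$ is itself radiating. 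Only the overall sign conventions (the paper's $-\delta$ versus the normalization of \eqref{fundamentalsol}, and the sign of the logarithm in $\Delta\mathbb{G}$) deserve a second look, but the antisymmetry of \eqref{IBP} in $(u,v)$ guarantees the two source contributions enter with opposite signs, so the conclusion $u(x,z)=u(z,x)$ is unaffected.
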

\begin{proof}
To prove the claim, we let $u^s(\cdot , z)$ and  $u^s(\cdot , x)$ be the solutions to \eqref{direct2}--\eqref{direct4} with point sources located at $x,z \in \Gamma$. Then, using the integration by parts formula \eqref{IBP} for $v=\mathbb{G}(\cdot , x;k)$ and $u=u^s(\cdot , z)$ in the region $B_R\setminus\overline{D}$, letting $R\to\infty$ and using the radiation condition we obtain
\begin{align}
\nonumber
        u^s(x,z)=\,& -\int_{\partial D}\left(\Delta \mathbb{G}( y , x;k)\partial_{\nu}u^s(y,z) - {\partial_\nu }\Delta \mathbb{G}(y , x;k)u^s(y,z) \right) \text{ds}(y) \\
        \label{us1}
        \,& +\int_{\partial D}\left( \partial_{\nu} \mathbb{G}(y , x;k) \Delta u^s(y,z)-\mathbb{G}(y , x;k) {\partial_\nu }\Delta u^s(y,z)\right) \text{ds}(y)\,.
\end{align}
Reversing the roles of $x$ and $z$ in the argument above would yield
\begin{align}
\nonumber
        u^s(z,x)=\,& -\int_{\partial D}\left(\Delta \mathbb{G}( y , z;k)\partial_{\nu}u^s(y,x) - {\partial_\nu }\Delta \mathbb{G}(y , z;k)u^s(y,x) \right) \text{ds}(y) \\
        \label{us2}
        \,& +\int_{\partial D}\left( \partial_{\nu} \mathbb{G}(y , z;k) \Delta u^s(y,x)-\mathbb{G}(y , z;k) {\partial_\nu }\Delta u^s(y,x)\right) \text{ds}(y)\,.
\end{align}
Similarly, using integrating by parts formula \eqref{IBP} in the region $B_R\setminus\overline{D}$  with the functions $v = u^s(\cdot ,x)$ and $u=u^s(\cdot ,z)$ along with the radiation condition gives 
    \begin{align}
    \nonumber
       0= \,& -\int_{\partial D}\Delta u^s(y,x)\partial_{\nu}u^s(y,z)-{\partial_\nu }\Delta u^s(y,x)u^s(y,z)\text{ds}(y) \\
       \label{us3}
    \,&+\int_{\partial D}\partial_{\nu}u^s(y,x) \Delta u^s(y,z)-u^s(y,x){\partial_\nu }\Delta u^s(y,z)\text{ds}(y).
    \end{align}
Note that, by the transmission conditions \eqref{direct3}, the traces in the integral above are continuous across $\partial D$. Similarly, integrating by parts for $v=\mathbb{G}(\cdot , x;k)$ and $u=\mathbb{G}(\cdot,z;k)$ inside of the region $D$ gives 
    \begin{align}
    \nonumber
        0=&\,-\int_{\partial D}\Delta\mathbb{G}(y,x;k)\partial_{\nu}\mathbb{G}(y,z;k)-{\partial_\nu }\Delta \mathbb{G}(y,x;k)\mathbb{G}(y,z;k)\,\text{ds}(y) \\
        \label{us4}
    &\,+\int_{\partial D}\partial_{\nu}\mathbb{G}(y,x;k)\Delta\mathbb{G}(y,zk)-\mathbb{G}(y,x;k){\partial_\nu }\Delta \mathbb{G}(y,z;k)\,\text{ds}(y).
    \end{align}
Here we have used the fact that $x,z \notin D$ so that both fundamental solutions satisfy the same biharmonic equation $(\Delta^2-k^4)\mathbb{G}(y, \cdot;k) = 0$ in $D$. Now by the symmetry of the fundamental solution, adding \eqref{us1} and \eqref{us4} and subtracting \eqref{us3} from \eqref{us2} we obtain 
\begin{align}
\nonumber
u^s(x,z)-u^s(z,x)=&\, -\int_{\partial D}\partial_{\nu}u(y,z)\Delta u(y,x)-u(y,z){\partial_\nu }\Delta u(y,x) \,  \text{ds}(y)\\
\label{subtractionus}
&\,+\int_{\partial D}\Delta u(y,z)\partial_{\nu}u(y,x)-{\partial_\nu }\Delta u(y,z)u(y,x)\,\text{ds}(y)\,,
\end{align}
where $u(\cdot,\cdot)=\mathbb{G}(\cdot, \cdot;k)+u^s(\cdot,\cdot)$ is the total field. Since both total fields satisfy $(\Delta^2-k^4 n )u(y, \cdot) = 0$ in $D$, by appealing to the integration by parts formula \eqref{IBP} once again  we have that 
\begin{align*}
0=&\, -\int_{\partial D}\partial_{\nu}u(y,z)\Delta u(y,x)- u(y,z) {\partial_\nu }\Delta u(y,x)\,\text{ds}(y)\\
&\, +\int_{\partial D}\Delta u(y,z)\partial_{\nu}u(y,x)-{\partial_\nu }\Delta u(y,z)u(y,x)\,\text{ds}(y)\,.
\end{align*}
Since this is the right hand side of \eqref{subtractionus}, we have obtained $u^s(x,z)-u^s(z,x)=0$, proving the claim. 
\end{proof}

From, Theorem \ref{firstreciprocity} we see that the scattered field $u^s$ for a point source incident field inherited the symmetry on the measurement curve $\Gamma$ from the incident field.

Now we turn our attention to the case when the incident field $u^i(x, d)=\text{e}^{\text{i}kx \cdot d}$ with incident direction $d \in \mathbb{S}^1$. Recall that the  incident field satisfies 
$$\Delta^2 u^i(\cdot , d) -k^4u^i(\cdot , d) = 0 \quad \text{ in $\R^2$}$$
for any direction $d$ on the unit circle. With this in mind, we can prove another reciprocity relation for the far-field pattern $u^\infty$ of the scattered field. 

\begin{theorem}\label{secondreciprocity}
 Let $u^{\infty}(\hat{x},d)$ denote the far-field pattern defined by \eqref{far-fieldpattern} corresponding to the direct scattering problem \eqref{direct2}--\eqref{direct4}. Then
 $$
 u^{\infty}(\hat{x},d)=u^{\infty}(-d,-\hat{x}).
 $$ 
\end{theorem}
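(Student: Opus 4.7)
The plan is to apply the biharmonic Green's formula \eqref{IBP} to the two total fields $u_j = u^i_j + u^s_j$ corresponding to incident plane waves in directions $d$ and $-\hat{x}$, where $u^i_1(\cdot) = u^i(\cdot, d)$ and $u^i_2(\cdot) = u^i(\cdot, -\hat{x})$, and then exploit the antisymmetry of the resulting four-term surface integral in conjunction with the transmission conditions \eqref{direct3}. Both total fields belong to $H^2_{\mathrm{loc}}(\R^2)$ and satisfy $\Delta^2 u_j - k^4 n\,u_j = 0$ in $\R^2$.

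I would first apply \eqref{IBP} with $(u,v)=(u_1,u_2)$ separately in $D$ and in $B_R\setminus\overline{D}$. In $D$ the volume integral equals $k^4\int_D n(u_1 u_2 - u_2 u_1)\,\mathrm{d}x = 0$, so the four-term boundary integral on $\partial D$ taken from the interior vanishes. In $B_R\setminus\overline{D}$ the cancellation is identical because $n\equiv 1$ there; after reconciling the normal orientation on $\partial D$ and invoking the continuity of the four traces $u_j,\partial_\nu u_j,\Delta u_j,\partial_\nu\Delta u_j$ across $\partial D$ provided by \eqref{direct3}, the two $\partial D$ contributions cancel and I am left with the identity
\[
\int_{\partial B_R}\bigl(\partial_\nu u_1\,\Delta u_2 - u_1\,\partial_\nu\Delta u_2 + u_2\,\partial_\nu\Delta u_1 - \Delta u_1\,\partial_\nu u_2\bigr)\,\mathrm{d}s = 0
\]
for every $R$ with $\overline{D}\subset B_R$.

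Next I would expand this integrand by bilinearity in $u_j = u^i_j + u^s_j$, producing four cross terms which I analyze as $R\to\infty$. The plane-wave--plane-wave term vanishes for every $R$ by another application of \eqref{IBP} on $B_R$, since both $u^i_j$ are entire biharmonic solutions and $k^4(u^i_1 u^i_2 - u^i_2 u^i_1) = 0$. For the two mixed terms I would exploit the splitting $u^s = u_\p + u_\e$ together with the identities $\Delta u^i = -k^2 u^i$, $\Delta u_\p = -k^2 u_\p$, and $\Delta u_\e = k^2 u_\e$: a direct computation shows that the evanescent contribution to the four-term integrand cancels pointwise, while the propagating contribution collapses to $2k^2$ times the Helmholtz-type bilinear form appearing in the representation \eqref{far-fieldpattern}. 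Consequently the two mixed terms contribute $2k^2\,u^\infty(\hat{x},d)$ and $-2k^2\,u^\infty(-d,-\hat{x})$, the opposite sign arising from the antisymmetry of the four-term in $(u,v)$.

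The remaining scattered--scattered term is the main technical obstacle and must be shown to vanish in the limit. Its pr/ev cross pieces again vanish pointwise by the same algebraic identity, the ev/ev piece vanishes on account of the exponential decay of $u_\e$ and $\partial_r u_\e$, and the pr/pr piece reduces to $2k^2\int_{\partial B_R}(u_{\p,1}\partial_r u_{\p,2} - u_{\p,2}\partial_r u_{\p,1})\,\mathrm{d}s$, which tends to zero by the standard Sommerfeld cancellation (both factors behave like $R^{-1/2}e^{\mathrm{i}kR}$ times an angular amplitude, the leading $\mathrm{i}k$ contributions cancel, and the residue integrates to $O(R^{-1})$). Assembling the four contributions and passing to the limit yields $2k^2\bigl(u^\infty(\hat{x},d) - u^\infty(-d,-\hat{x})\bigr) = 0$, which is the reciprocity claim. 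As a safety check on this last step, one may also note that the scattered--scattered integral is $R$-independent by one further application of \eqref{IBP} in annular subdomains of the exterior, so that its vanishing limit forces it to be identically zero.
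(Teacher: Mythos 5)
Your proof is correct and follows essentially the same route as the paper's: both rest on the four--term Green identity \eqref{IBP}, the expansion of the total--field boundary functional on $\partial B_R$ into incident--incident, scattered--scattered and mixed pieces, and the identification of the mixed pieces with $\pm 2k^2 u^{\infty}$ through the propagating/evanescent splitting --- you simply run the argument from the total--field identity outward rather than building up from the far--field representation as the paper does. The one substantive difference is your treatment of the scattered--scattered term, where your justification (pointwise cancellation of the pr/ev and ev/ev pieces, the Sommerfeld/Wronskian cancellation for the pr/pr piece, and the $R$--independence check) is the correct one; the paper's one--line appeal to ``the same biharmonic equation'' does not literally apply to the scattered fields, which satisfy an inhomogeneous equation with variable coefficient inside $D$, so your version is in fact the more careful of the two.
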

\begin{proof}
Here we let $u^s(\cdot ,d)$ and $u^s(\cdot ,-\hat{x})$ be solutions to \eqref{direct2}--\eqref{direct4} with incident directions $d$ and  $-\hat{x} \in \mathbb{S}^1$, respectively. We now recall the definition of the far-field pattern  
$$
u^{\infty}(\hat{x},d)=\int_{\partial B_R} u_\p(y,d)\partial_{\nu}u^i (y,-\hat{x})-\partial_{\nu}u_\p(y,d) u^i(y,-\hat{x}) \, \text{d}s(y)
$$
where $u_\p$ and $u_\e$ are the propagating and evanescent parts of the scattered field satisfying respectively the Helmholtz and modified Helmholtz equation which satisfies the Helmholtz equation in $\R^2\setminus \overline{D}$. We then arrive to the following 
\begin{align*}
-2k^2u^{\infty}(\hat{x},d)=&\, -2k^2\int_{\partial B_R} u_\p(y,d)\partial_{\nu_y}u^i(y,-\hat{x})-\partial_{\nu_y}u_\p(y,d) u^i(y,-\hat{x}) \, \text{d}s(y)\\
=&\,\phantom{-2k^2}\int_{\partial B_R} \Delta u_\p(y,d)\partial_{\nu}u^i(y,-\hat{x})-\partial_{\nu}\Delta u_\p(y,d) u^i(y,-\hat{x}) \, \text{d}s(y)\\
&\, \phantom{k^2}+\int_{\partial B_R} u_\p(y,d)\partial_{\nu}\Delta u^i(y,-\hat{x})-\partial_{\nu}u_\p(y,d) \Delta u^i(y,-\hat{x}) \, \text{d}s(y)\,
\end{align*}
where we have used that both $u^i(y,-\hat{x})$ and $u_\p (y,d)$ satisfy Helmholtz equation in $\R^2\setminus \overline{D}$. We then proceed by adding and subtracting the term  
$$
k^2\int_{\partial B_R} u_\e(y,d)\partial_{\nu}u^i(y,-\hat{x})-\partial_{\nu}u_\e(y,d) u^i(y,-\hat{x}) \, \text{d}s(y)
$$
to obtain
\begin{align}
\nonumber
-2k^2u^{\infty}(\hat{x},d)=&\,\phantom{+}\int_{\partial B_R} \Delta u^s(y,d)\partial_{\nu}u^i(y,-\hat{x})-\partial_{\nu}\Delta u^s(y,d) u^i(y,-\hat{x}) \, \text{d}s(y) \\
\label{farfield1}
&\, +\int_{\partial B_R} u^s(y,d)\partial_{\nu}\Delta u^i(y,-\hat{x})-\partial_{\nu}u^s(y,d) \Delta u^i(y,-\hat{x})\, \text{d}s(y)\,,
\end{align}
where we have used that $u_\e$ satisfies the modified Helmholtz equation in $\R^2\setminus \overline{D}$ as well as the fact that $u^i$ satisfies the Helmholtz equation in $\R^2$ and that $u^s=u_\p+u_\e$. Now, \eqref{farfield1} also implies that 
\begin{align}
\nonumber
-2k^2u^{\infty}(-d,-\hat{x})=&\,\phantom{+}\int_{\partial B_R} \Delta u^s(y,-\hat{x})\partial_{\nu}u^i(y,d)-\partial_{\nu}\Delta u^s(y,-\hat{x}) u^i(y,d)\, \text{d}s(y) \\
\label{farfield2}
&\, +\int_{\partial B_R} u^s(y,-\hat{x})\partial_{\nu}\Delta u^i(y,d)-\partial_{\nu}u^s(y,-\hat{x}) \Delta u^i(y,d)\, \text{d}s(y).
\end{align}
By applying the integration by parts formula \eqref{IBP} for $v=u^i(\cdot,d)$ and $uu^i(\cdot,-\hat{x})$ in $B_R$= we arrive at the identity 
\begin{align}
\nonumber
0=&\,-\int_{\partial B_R}u^i(y,d)\partial_{\nu}\Delta u^i(y,-\hat{x})-\Delta u^i(y,d)\partial_{\nu}u^i(y,-\hat{x})\text{d}s(y)\\
\label{ff3}
&\,+\int_{\partial B_R}u^i(y,-\hat{x})\partial_{\nu}u^i(y,d)-\Delta u^i(y,-\hat{x})\partial_{\nu}u^i(y,d)\text{d}s(y)
\end{align}
since they both solve the same biharmonic equation in $\R^2$. Similarly, the integration by parts formula \eqref{IBP} applied to $v=u^s(\cdot,d)$ and $u=u^s(\cdot,-\hat{x})$ in $B_R$ implies that 
\begin{align}
\nonumber
0=&\, -\int_{\partial B_R}u^s(y,d)\partial_{\nu}\Delta u^s(y,-\hat{x})-\Delta u^s(y,d)\partial_{\nu}u^s(y,-\hat{x})\, \text{d}s(y)\\
\label{ff4}
&\,+\int_{\partial B_R}u^s(y,-\hat{x})\partial_{\nu}\Delta u^s(y,d)-\Delta u^s(y,-\hat{x})\partial_{\nu}u^s(y,d) \, \text{d}s(y)
\end{align}
since again the two functions satisfy the same biharmonic equation in $\R^2$. Now, we can combine  \eqref{farfield1}--\eqref{ff4} to get the following 
\begin{align*}
-2k^2u^{\infty}(\hat{x},d)+ 2 k^2u^{\infty}(-d,-\hat{x})=&\, -\int_{\partial B_R}u(y,d)\partial_{\nu}\Delta u(y,-\hat{x})-\Delta u(y,d)\partial_{\nu}u(y,-\hat{x})\text{d}s(y)\\
&\,+\int_{\partial B_R}u(y,-\hat{x})\partial_{\nu}\Delta u^s(y,d)-\Delta u(y,-\hat{x})\partial_{\nu}u(y,d)\text{d}s(y)
\end{align*}
where $u=u^s+u^i$ denotes the total field. Just as in the previous result, the integration by part formula \eqref{IBP} along with the fact that $u(\cdot,d)$ and $u(\cdot,-\hat{x})$ satisfy the same biharmonic equation in $\R^2$ imply
\begin{align*}
0=&\,-\int_{\partial B_R}u(y,d)\partial_{\nu}\Delta u(y,-\hat{x})-\Delta u(y,d)\partial_{\nu}u(y,-\hat{x}) \, \text{d}s(y)\\
&\, +\int_{\partial B_R}u(y,-\hat{x})\partial_{\nu}\Delta u^s(y,d)-\Delta u(y,-\hat{x})\partial_{\nu}u(y,d) \, \text{d}s(y).
\end{align*}
Thus, we arrive to the desired result since we have shown 
$$-2k^2u^{\infty}(\hat{x},d)+ 2 k^2u^{\infty}(-d,-\hat{x})=0$$ 
for any $\hat{x},d \in \mathbb{S}^1$.  
\end{proof}

With these results, we see that the classical reciprocity relations for both point sources and plane waves hold for the biharmonic scattering problem \eqref{direct2}--\eqref{direct4}. Note that, even though for well-posedness the ``absorbing  assumption'' \eqref{absorbing} is required, the reciprocity relations do not depend on it. Therefore, Theorems \ref{firstreciprocity} and \ref{secondreciprocity} hold even for non-absorbing scatterers, provided that the direct problem is well-posed. \\

\noindent{\bf Analytical Example for Theorem \ref{secondreciprocity} on the Unit Disk:} Here we give an example to show the validity of the reciprocity relationship given in Theorems \ref{secondreciprocity}. To this end, we will assume that $D=B_1$ (i.e. the unit disk) where $n$ is given by a constant. This implies that the direct problem \eqref{direct2}--\eqref{direct3} for the scattered field $u^s$ outside the scatterer and the total field $u$ inside the scatterer, can be written as 
$$
\Delta^2 u^s-k^4 u^s= 0 \,\,\, \text{in $\R^2 \setminus \overline{B}_1$} \quad \text{ and } \quad  \Delta^2 u-k^4 n u= 0  \,\,\, \text{in ${B}_1$}.
$$
{ Denoting by $f(r,\theta)$ the value of the function $f$ at the point $x= r \left(\cos\theta,  \sin\theta \right)$,} the boundary conditions at $r=1$ are given by 
$$
u^s(1,\theta) - u(1,\theta)= -u^i (1,\theta ), \quad  \partial_r u^s(1,\theta) -  \partial_r u(1,\theta)= - \partial_r u^i (1,\theta)
$$
along with 
$$
\Delta u^s(1,\theta) - \Delta u(1,\theta)= -\Delta u^i (1,\theta) \quad \text{ and } \quad \quad  \partial_r \Delta u^s(1,\theta) -  \partial_r \Delta u(1,\theta)= - \partial_r \Delta u^i (1,\theta).
$$
Recall, the Jacobi--Anger expansion (see e.g. \cite{coltonkress}) for the incident plane wave 
$$u^{i} (r,\theta) = \sum\limits_{| \ell | =0}^{\infty} \text{i}^{\ell} J_{\ell}(kr)\text{e}^{\text{i}{\ell}(\theta - \phi)}
$$
{where the direction is given by $d = \left(\cos\phi, \sin\phi \right)$}. With this, we assume that the propagating and evanescent parts of the scattered field can be written as the Fourier series 
\begin{align*}
u_\p (r, \theta) = \sum\limits_{| \ell | =0}^{\infty}  \text{i}^{\ell}  a_{\ell} H_{\ell}^{(1)}(kr)\text{e}^{\text{i} \ell (\theta - \phi)} \quad \text{and} \quad u_\e  (r,\theta)  = \sum\limits_{| \ell | =0}^{\infty} \text{i}^{\ell}  b_{\ell} H_{\ell}^{(1)}({\mathrm{i}k}r)\text{e}^{\text{i} \ell (\theta - \phi)}.
\end{align*}
Similarly, we assume that the total field $u=u_{\text{H}} + u_{\text{M}}$ such that 
\begin{align*}
u_{\text{H}} (r, \theta) = \sum\limits_{| \ell | =0}^{\infty}  \text{i}^{\ell}  c_{\ell} J_{\ell}(k\sqrt[4]{n}r) \text{e}^{\text{i} \ell (\theta - \phi)} \quad \text{and} \quad u_{\text{M}}   (r,\theta)  = \sum\limits_{| \ell | =0}^{\infty} \text{i}^{\ell}  d_{\ell} J_{\ell}(\text{i} k\sqrt[4]{n}r)\text{e}^{\text{i} \ell (\theta - \phi)} 
\end{align*}
 where ${J}_\ell$ is the Bessel function of first kind. Notice that, we have $\Delta u_{\text{H}} =-k^2 \sqrt{n} u_{\text{H}}$ and $\Delta u_{\text{M}} =k^2 \sqrt{n} u_{\text{M}}$ in $B_1$.  Therefore, the four boundary conditions at $r=1$  impose the following system of equations 
\begin{equation*}\scalebox{0.95}{${\displaystyle
\begin{bmatrix}
H_{\ell}^{(1)}(k) & H_{\ell}^{(1)}({\mathrm{i}k})  & -J_{\ell}(k\sqrt[4]{n})  & - J_{\ell}(\text{i}k\sqrt[4]{n}) \\[1ex]
k{H_{\ell}^{(1)}}'(kr) & {\mathrm{i}k} {H_{\ell}^{(1)}}' ({\mathrm{i}k}) & -k\sqrt[4]{n}J'_{\ell}(k\sqrt[4]{n})  & - \text{i} k\sqrt[4]{n}J'_{\ell}(\text{i}k\sqrt[4]{n}) \\[1ex]
-k^2H_{\ell}^{(1)}(k) & k^2 H_{\ell}^{(1)}({\mathrm{i}k})  & k^2 \sqrt{n}J_{\ell}(k\sqrt[4]{n})  & - k^2 \sqrt{n}J_{\ell}(\text{i}k\sqrt[4]{n}) \\[1ex]
-k^3{H_{\ell}^{(1)}}'(kr) & {\mathrm{i}k^3} {H_{\ell}^{(1)}}' ({\mathrm{i}k}) & k^3(\sqrt[4]{n})^3 J'_{\ell}(k\sqrt[4]{n})  & - \text{i}  k^3(\sqrt[4]{n})^3J'_{\ell}(\text{i}k\sqrt[4]{n}) 
\end{bmatrix}\!
\begin{bmatrix}
a_{\ell}  \\[1.5ex] b_{\ell} \\[1.5ex] c_{\ell} \\[1.5ex] d_{\ell}
\end{bmatrix}\!
=  \!
\begin{bmatrix}  -J_{\ell}(k) \\[1.5ex] -kJ_{\ell}'(k) \\[1.5ex] k^2 J_{\ell}(k) \\[1.5ex] k^3 J_{\ell}'(k)  \end{bmatrix} }$ }
\end{equation*}
{for all } ${\ell} \in \Z$. This implies that, by solving the system for the Fourier coefficients (in particular $a_\ell$) the far-field pattern is given by 
$$u^{\infty} (\theta,\phi) = \frac{ 4 }{ \text{i} }\sum\limits_{| {\ell} | =0}^{\infty}  a_{\ell}   \text{e}^{\text{i}{\ell}(\theta - \phi)}$$
from the asymptotics of the Hankel functions. This implies that, the $u^{\infty} (\theta,\phi)  = u^{\infty} (\phi+\pi,\theta+\pi)$ by appealing to Theorem \ref{secondreciprocity}.
Here, to approximate $u^{\infty}$ we take the truncated series for $|\ell| =0, \cdots , 10$. 

For our examples, we use \texttt{Matlab} to compute the Fourier coefficients using the ``$\setminus$'' command. We can now see from Table \ref{circleTable1} the above reciprocity relationship.
\begin{table}[!ht]
\centering
 \begin{tabular}{r|c|c}
 $(\theta,\phi )$  & $u^{\infty} (\theta,\phi)$  & $u^{\infty} (\phi+\pi,\theta+\pi)$\\
\hline
$(\pi/2,\pi)$ & $0.0414 + 0.6795\text{i} $ &  $0.0414 + 0.6795\text{i} $\\
$(\pi/3,\pi/2)$ & $5.3133 + 7.3659\text{i}$ & $5.3133 + 7.3659\text{i}$\\
$(\sqrt{\pi},\pi/2)$ & $0.4839 + 1.8926\text{i}$ & $0.4839 + 1.8926\text{i}$\\
$(1,1/2)$ & $5.4547 + 7.4765\text{i}$ & $5.4547 + 7.4765\text{i}$\\
\hline
 \end{tabular}
 \caption{Numerical validation of Theorem \ref{secondreciprocity} where $k=2$ and $n=4+\text{i}$.} \label{circleTable1}
\end{table}
From Table \ref{circleTable1} we see that numerically the relationship valid for at least four digits. 

Recall, that Theorem \ref{secondreciprocity} is valid whenever the direct problem is well-posed (i.e. possibly for non-absorbing scatterers). We provide Table \ref{circleTable2} as further validation of the reciprocity relationship. Here, consider the far-field matrices
$$\textbf{F}_1=\Big[u^\infty(\theta_i , \phi_j)\Big]^{64}_{i,j=1} \quad \text{and} \quad \textbf{F}_2=\Big[u^\infty(\phi_j +\pi , \theta_i + \pi)\Big]^{64}_{i,j=1}$$ 
with $\theta_i=2\pi(i-1)/64$, and $\phi_j=2\pi(j-1)/64$ for $i,j=1,\dots,64$. Therefore, we should have that $\|\textbf{F}_1 - \textbf{F}_2 \|_2 \approx 0$. Now, in Table \ref{circleTable2} for multiple pairs of wave numbers $k$ and refractive indices $n$ we see that the far-field matrices are approximately the same, even for real valued $n$. 
\begin{table}[!ht]
\centering
 \begin{tabular}{r|c}
 $(k,n)$  & $\|\textbf{F}_1 - \textbf{F}_2 \|_2$ \\
\hline
$(2,4+\text{i})$ &  $9.9958 \times 10^{-14}$\\
$(2,4)$ & $1.1834\times 10^{-13}$ \\
$(\pi,16+\text{i}/2)$  & $3.3697\times 10^{-13}$ \\
$(\pi,16)$ & $3.5576\times 10^{-13}$ \\
$(4, 2+2\text{i})$  & $2.5771\times 10^{-13}$ \\
$(4,2)$ & $2.7684\times 10^{-13}$ \\
\hline
 \end{tabular}
 \caption{Norm of the difference in the  far-field matrices for various $k$ and $n$.} \label{circleTable2}
\end{table}
\\ 

\noindent{\bf Analytical Example for Theorem \ref{firstreciprocity} on the Unit Disk:} Now, for this case we can work similarly to the previous example. Therefore, we recall that for $|z| > |x|$ we have the expansion 
$$H^{(1)}_0( \tau | x-z|)=\sum\limits_{|\ell|=0}^{\infty}  H^{(1)}_\ell( \tau |z|) {J}_\ell\left(\tau |x|  \right) \text{e}^{\text{i}\ell(\theta-\phi)}$$
for a given constant $\tau$ (see e.g. \cite{coltonkress}) where again ${J}_\ell$ is the Bessel function of first kind and $H^{(1)}_\ell$ is the first kind Hankel function of order $\ell$. Here, we let $\Gamma = \partial B_2$ and be we will again assume that $D=B_1$. This implies that, in polar coordinates $x= r \big(\cos(\theta),  \sin(\theta) \big)$ and $z = 2\big(\cos(\phi), \sin(\phi) \big)$. 

Taking an incident wave generated by a fundamental solution, and substituting the series expansion above into the definition \eqref{fundamentalsol}, it follows that the point source located at $z  \in \partial B_2$ is given by
$$
u^i(r ,\theta ) = \mathbb G(x,z) =\frac{\text{i}}{8k^2}\sum\limits_{|\ell|=0}^{\infty}   \left[ H^{(1)}_\ell(2 k) {J}_\ell\left(k r  \right) - H^{(1)}_\ell( 2 \text{i} k ) {J}_\ell\left(\text{i} k r  \right)\right] \text{e}^{\text{i}\ell(\theta-\phi)}.
$$
As before, we assume that the propagating and evanescent parts of the scattered field can be written as the Fourier series 
\begin{align*}
u_\p (r, \theta) = \sum\limits_{| \ell | =0}^{\infty} a_{\ell}  H_{\ell}^{(1)}(kr)\text{e}^{\text{i} \ell (\theta - \phi)} \quad \text{and} \quad u_\e  (r,\theta)  = \sum\limits_{| \ell | =0}^{\infty}   b_{\ell} H_{\ell}^{(1)}({\mathrm{i}k}r)\text{e}^{ \text{i} \ell (\theta - \phi)}.
\end{align*}
Similarly, we assume that the total field $u=u_{\text{H}} + u_{\text{M}}$ such that 
\begin{align*}
u_{\text{H}} (r, \theta) = \sum\limits_{| \ell | =0}^{\infty}   c_{\ell} J_{\ell}(k\sqrt[4]{n}r) \text{e}^{ \text{i} \ell (\theta - \phi)} \quad \text{and} \quad u_{\text{M}}   (r,\theta)  = \sum\limits_{| \ell | =0}^{\infty}  d_{\ell} J_{\ell}( \text{i} k\sqrt[4]{n}r)\text{e}^{\text{i} \ell (\theta - \phi)}.
\end{align*}
By applying the boundary conditions at $r=1$ gives that to find the Fourier coefficients, we solve the system of from the previous example where the new right hand side is given by 
$$
- \frac{\text{i}}{8k^2}  \begin{bmatrix} H^{(1)}_\ell(2 k) {J}_\ell\left(k   \right) - H^{(1)}_\ell( 2 \text{i} k ) {J}_\ell\left(\text{i} k  \right) \\ 
k H^{(1)}_\ell(2 k) {J}'_\ell\left(k   \right) - \text{i} k H^{(1)}_\ell( 2 \text{i} k ) {J}'_\ell\left(\text{i} k   \right) \\ 
-k^2 H^{(1)}_\ell(2 k) {J}_\ell\left(k   \right) +k^2  H^{(1)}_\ell( 2 \text{i} k ) {J}_\ell\left(\text{i} k  \right) \\ 
- k^3 H^{(1)}_\ell(2 k) {J}'_\ell\left(k   \right) +\text{i} k^3 H^{(1)}_\ell( 2 \text{i} k ) {J}'_\ell\left(\text{i} k \right) \end{bmatrix} .
$$
This is obtained as before, where we again use the fact the the solution and the incident field are given by the sum of a solution to the Helmholtz equation and the modified Helmholtz equation. 
 
Now to provide numerical evidence, we again use \texttt{Matlab} to compute the Fourier coefficients using the ``$\setminus$'' command for the 4$\times$4 system of equations for each $\ell \in \Z$. With this in mind, notice that the near-field data 
$$u^s (x,z)  \quad \text{with} \quad \text{ $x= 2 \big(\cos(\theta),  \sin(\theta) \big)$ and $z = 2\big(\cos(\phi), \sin(\phi) \big)$. }$$ 
is given by $u^s (\theta,\phi)$.  From the Fourier series expansions we have that 
$$
u^s (\theta,\phi) =  \sum\limits_{| \ell | =0}^{\infty} \left[ a_{\ell}  H_{\ell}^{(1)}(2k) +    b_{\ell} H_{\ell}^{(1)}(2{\mathrm{i}k}) \right]\text{e}^{\text{i} \ell (\theta - \phi)}.
$$
We will again, take the truncated series for $|\ell| =0, \cdots , 10$ in our examples. With this, it is clear that by Theorem \ref{firstreciprocity} that $u^s (\theta,\phi) = u^s (\phi, \theta)$. This is shown in Table \ref{circleTable3} where we check the equality. Again we see that numerically, the relationship valid for at least four digits.  

\begin{table}[!ht]
\centering
 \begin{tabular}{r|c|c}
$(\theta,\phi )$   & $u^{\infty} (\theta,\phi)$  & $u^{s} (\phi,\theta)$\\
\hline
$(\pi/2,\pi)$ & $0.0018 + 0.0227\text{i} $ &  $0.0018 + 0.0227\text{i} $\\
$(\pi/3,\pi/2)$ & $-0.0026 - 0.0053\text{i}$ & $-0.0026 - 0.0053\text{i}$\\
$(\sqrt{\pi},\pi/2)$ & $0.0083 - 0.0001\text{i}$ &$0.0083 - 0.0001\text{i}$\\
$(1,1/2)$ & $-0.0017 - 0.0051\text{i}$ & $-0.0017 - 0.0051\text{i}$\\
\hline
 \end{tabular}
 \caption{Numerical validation of Theorem \ref{firstreciprocity} where $k=2$ and $n=4+\text{i}$.} \label{circleTable3}
\end{table}

Now, we show the reciprocity relationship for various values of $k$ and $n$, both constant. To this end, we will define the near-field matrices in order to check the relationship. We now define the matrices  
$$\textbf{N}_1=\Big[u^s(\theta_i , \phi_j)\Big]^{64}_{i,j=1} \quad \text{and} \quad \textbf{N}_2=\Big[u^s(\phi_j , \theta_i )\Big]^{64}_{i,j=1}$$ 
i.e. $\textbf{N}_1 = \textbf{N}_2^\top$, where we take $\theta_i=2\pi(i-1)/64$, and $\phi_j=2\pi(j-1)/64$ for $i,j=1,\dots,64$. Here, the reciprocity relationship $u^s (\theta,\phi) = u^s (\phi, \theta)$ implies that $\|\textbf{N}_1 - \textbf{N}_2 \|_2 \approx 0$ for any pair of values $(k,n)$, provided that the scattering problem is well-posed. Just as in the previous example, we will provide numerics for both an absorbing and non-absorbing scatterer. In Table \ref{circleTable4}, we see that the near-field matrices are approximately the same for various choices in parameters. 
\begin{table}[H]
\centering
 \begin{tabular}{r|c}
$(k,n )$ & $\|\textbf{N}_1 - \textbf{N}_2 \|_2$ \\
\hline
$(2,4+\text{i})$ &  $6.7592 \times 10^{-17}$\\
$(2,4)$ & $7.8676\times 10^{-17}$ \\
$(\pi,16+\text{i}/2)$  & $5.4290\times 10^{-17}$ \\
$(\pi,16)$ & $6.7592\times 10^{-17}$ \\
$(4, 2+2\text{i})$  & $6.6802\times 10^{-17}$ \\
$(4,2)$ & $7.4389\times 10^{-17}$ \\
\hline
 \end{tabular}
 \caption{Norm of the difference in the near-field matrices for various $k$ and $n$.} \label{circleTable4}
\end{table}

With this, we have shown analytically and numerically that the two reciprocity relationship are valid. Again, the reciprocity relationship are valid as long as the forward problem is well-posed. We have tested our theorems with both absorbing and non-absorbing parameters. \\


\noindent{\bf Numerical Example for Theorem \ref{firstreciprocity} in the case of a non-circular scatterer:} If the material properties in $D$ and $ \R^2 \setminus\overline{D}$ are such that the wave numbers on each region are the constants $\tau_- :=   k\sqrt[4]{n}$ in $D$ and $\tau_+ := k$ in $ \R^2 \setminus\overline{D}$, the total out of plane displacement $u$ can be split in the form
$$
u = \begin{cases} u^s + u^i & \text{ in } \R^2 \setminus\overline{D},\\ u^t & \text{ in } D \,,\end{cases} 
$$
where the superscripts $s$ and $t$ stand respectively for ``scattered'' and ``transmitted''. Moreover, the total wave $u$ satisfies the biharmonic wave equation with coefficient
$$
\tau = \begin{cases} \tau_+ & \text{ in }  \R^2 \setminus\overline{D} \\ \tau_- & \text{ in } D \end{cases}
$$
over the entire plane and it must satisfy the continuity conditions \eqref{directBR2} across the interface $\partial D$. Therefore, it follows that the unknown scattered and transmitted fields satisfy
\begin{alignat*}{6}
-\Delta^2 u^s + \tau_+^4 u^s &= 0 \qquad&& \text{ in }  \R^2 \setminus\overline{D}\,,\\
-\Delta^2 u^t + \tau_-^4 u^t &= 0 \qquad&& \text{ in } D\,,\\
u^t_-\big|_{\partial D} - u^s_+\big|_{\partial D} & = u^i_+\big|_{\partial D} \qquad&& \text{ on } {\partial D}\,,\\
\partial_\nu^-u^t - \partial_\nu^+u^s &= \partial_\nu^+u^i \qquad&& \text{ on } {\partial D}\,,\\
\Delta u^t_-\big|_{\partial D} - \Delta u^s_+\big|_{\partial D} &= \Delta u^i_+\big|_{\partial D} \qquad&& \text{ on } \partial D\,,\\
\partial_\nu^-\Delta u^t - \partial_\nu^+\Delta u^s &= \partial_\nu^+\Delta u^i \qquad&& \text{ on } \partial D\,,
\end{alignat*}
along with the radiation conditions
$$
\lim_{|x| \to \infty} \sqrt{r}(\partial_r u^s - \text{i} \tau_+ u^s) = 0 \qquad \text{ and } \qquad \lim_{r \to \infty} \sqrt{r}(\partial_r\Delta u^s - \text{i} \tau_+ \Delta u^s) = 0\,.
$$
By an argument analogous to that of Section \ref{direct-prob-LDSM}, the scattered and transmitted fields can be split into propagating and evanescent components as
$$
u^{s} =  u^{s}_{\text{pr}} + u^{s}_{\text{ev}} \qquad \text{ and } \qquad u^{t} =  u^{t}_{\text{pr}} + u^{t}_{\text{ev}}.
$$
The propagating and evanescent components satisfy the system
\begin{subequations}\label{eq:system}
\begin{alignat}{6}
\label{eq:systemA}
-\Delta u^s_{\text{pr}} - \tau_+^2 u^s_{\text{pr}} &= 0 \qquad&& \text{ in } \R^2 \setminus\overline{D}\,,\\[1ex]
\label{eq:systemB}
-\Delta u^s_{\text{ev}} + \tau_+^2 u^s_{\text{ev}} &= 0 \qquad&& \text{ in }  \R^2 \setminus\overline{D}\,,\\[1ex]
\label{eq:systemC}
-\Delta u^t_{\text{pr}} - \tau_-^2 u^t_{\text{pr}} &= 0 \qquad&& \text{ in } D\,,\\[1ex]
\label{eq:systemD}
-\Delta u^t_{\text{ev}} + \tau_-^2 u^t_{\text{ev}} &= 0 \qquad&& \text{ in } D\,,\\[1ex]
\label{eq:systemE}
\left(u^t_{\text{pr}} + u^t_{\text{ev}}\right)_-\big|_{\partial D} - \left(u^s_{\text{pr}} + u^s_{\text{ev}}\right)_+\big|_{\partial D} &= u^i_+\big|_{\partial D} \qquad&& \text{ on } \partial D\,,\\[1ex]
\label{eq:systemF}
\partial_\nu^-\left(u^t_{\text{pr}} + u^t_{\text{ev}}\right) - \partial_\nu^+\left(u^s_{\text{pr}} + u^s_{\text{ev}}\right) &= \partial_\nu^+u^i \qquad&& \text{ on } \partial D\,,\\[1ex]
\label{eq:systemG}
\tau_-^2\left(-u^t_{\text{pr}} + u^t_{\text{ev}}\right)_-\big|_{\partial D} - \tau_+^2\left(-u_{\text{pr}} + u_{\text{ev}}\right)_+\big|_{\partial D} &= \Delta u^i_+\big|_{\partial D} \qquad&& \text{ on } \partial D\,,\\[1ex]
\label{eq:systemH}
\tau_-^2\partial_\nu^- \left(- u^t_{\text{pr}} + u^t_{\text{ev}}\right) - \tau_+^2\partial_\nu^+ \left(-u^s_{\text{pr}} + u^s_{\text{ev}}\right) &= \partial_\nu^+\Delta u^i \qquad&& \text{ on } \partial D\,,
\end{alignat}
together with the radiation conditions
\begin{equation}\label{eq:systemI}
\lim_{r \to \infty} \sqrt{r}\left(\partial_r u^s_{\text{pr}} - \text{i} \tau_+ u^s_{\text{pr}}\right) = 0 \qquad \text{ and } \qquad \lim_{r \to \infty} \sqrt{r}\left(\partial_ru^s_{\text{ev}} - \text{i} \tau_+ u^s_{\text{ev}}\right) = 0.
\end{equation}
\end{subequations}
Note that equations \eqref{eq:systemA} and \eqref{eq:systemB} were used to rewrite the continuity conditions involving the Laplacian and its normal derivative in terms of $u^t$ and $u^s$ and the corresponding wave numbers, resulting in \eqref{eq:systemG} and \eqref{eq:systemH}.

The absence of a source term in the PDEs \eqref{eq:systemA} through \eqref{eq:systemD} suggests a boundary integral treatment. Following \cite{novelbi}, we propose integral representations of the form
\begin{alignat*}{8}
u^s_{\text{pr}} &= \int_{\partial D} \partial_{\nu(y)}\mathbb G_{\text{pr}}(x,y; \tau_+)\varphi_s (y) \,\text{d}s(y)\,, \qquad\qquad&& 
u^s_{\text{ev}} &= -\int_{\partial D} \mathbb G_{\text{ev}}(x,y; \tau_+)\lambda_s (y) \,\text{d}s(y)\, \\
u^t_{\text{pr}} &= -\int_{\partial D} \partial_{\nu(y)}\mathbb G_{\text{pr}}(x,y; \tau_-)\varphi_t  (y) \,\text{d}s(y)\,, \qquad\qquad&& 
u^t_{\text{ev}} &= \int_{\partial D} \mathbb G_{\text{ev}}(x,y ; \tau_-)\lambda_t  (y) \, \text{d}s(y)\,.
\end{alignat*}
In the representations above,
$$
\mathbb G_{\text{ev}}(x,y;k) := \frac{\text{i}}{4} H^{(1)}_0(k|x-y|)\quad \text{ and } \quad \mathbb G_{\text{pr}}(x,y;k) := \frac{\text{i}}{4} H^{(1)}_0(\text{i}k|x-y|)
$$
are the fundamental solutions of the Helmholtz and modified Helmholtz equation, and the densities 
$$
\varphi_s,\varphi_t\in H^{1/2}({\partial D}) \qquad \text{ and } \qquad \lambda_s,\lambda_t\in H^{-1/2}({\partial D})
$$
 are unknowns to be determined using the transmission conditions \eqref{eq:systemE}--\eqref{eq:systemH}.
 
We will now use standard arguments from potential theory that we shall not detail here (see, for instance \cite{HsWe2021}) to express the transmission conditions in terms of the unknown densities. Doing so leads to the following system of boundary integral equations 

{\small \begin{equation}\label{eq:BIsystem}\scalebox{0.92}{$
{\displaystyle 
\hspace{-0.95cm}\begin{bmatrix}
 \mathcal V(\tau_-) &-\left( \frac{1}{2}\mathcal I - \mathcal K(\tau_-)\right)  &\mathcal V(\tau_+)  & -\left( \frac{1}{2}\mathcal I + \mathcal K(\tau_+) \right)\\[1.5ex]
\left(\frac{1}{2} + \mathcal J(\tau_-) \right) & \mathcal W(\tau_-)  & -\left( \frac{1}{2}\mathcal I + \mathcal J(\tau_+)\right) & \mathcal W(\tau_+) \\[1.5ex]
\tau_-^2\mathcal V(\tau_-) & \tau_-^2\left( -\frac{1}{2}\mathcal I + \mathcal K(\tau_-)\right) & \tau_+^2\mathcal V(\tau_+) & \tau_+^2\left( \frac{1}{2}\mathcal I + \mathcal K(\tau_+)\right) \\[1.5ex]
\tau_-^2\left( \frac{1}{2}\mathcal I + \mathcal J(\tau_-)\right)  & -\tau_-^2\mathcal W(\tau_-)  & \tau_+^2\left(-\frac{1}{2}\mathcal I + \mathcal J(\tau_+)\right)  & -\tau_+^2\mathcal W(\tau_+)
\end{bmatrix}\!\!
\begin{bmatrix}
\lambda_t\\[1.5ex] \varphi_t \\[1.5ex] \lambda_s\\[1.5ex] \varphi_s  
\end{bmatrix}\!
=\!
\begin{bmatrix}
u^i \\[1.5ex] \partial_\nu^+ u^i \\[1.5ex] \Delta u^i \\[1.5ex] \partial_\nu^+ \Delta u^i
\end{bmatrix}\,, }$}
\end{equation} }
\noindent where $\mathcal I$ is the identity operator and the boundary integral operators appearing in the matrix are defined for $\xi\in\mathbb C$, $\lambda\in\,H^{-1/2}({\partial D})$, and  $\varphi\in\,H^{1/2}({\partial D})$ by:
\begin{alignat*}{10}
\mathcal V(\xi)\lambda :=\,& \int_{\partial D} \mathbb G_{\text{ev}}(x,y;\xi)\lambda (y) \, \text{d}s(y)\,, \qquad&& 
\mathcal J(\xi)\lambda :=\,& \int_{\partial D} \partial_{\nu(x)}\mathbb G_{\text{ev}}(x,y;\xi)\lambda(y)\,\text{d}s(y)\,,\\[1ex]
\mathcal W(\xi)\varphi :=\,& \int_{\partial D} \partial_{\nu(x)}\partial_{\nu(y)}\mathbb G_{\text{pr}}(x,y;\xi)\varphi (y)\,\text{d}s(y)\,, \qquad&& 
\mathcal K(\xi)\varphi :=\,& \int_{\partial D} \partial_{\nu(y)}\mathbb G_{\text{pr}}(x,y;\xi)\varphi (y)\,\text{d}s(y)\,.
\end{alignat*}
The detailed analysis of the system \eqref{eq:BIsystem} is beyond the scope of this article and will be the subject of a separate communication \cite{SV2025}. The system was solved numerically using {\tt deltaBEM} \cite{DoLuSa2014a,deltaBEM,DoLuSa2014b,DoSaSa2015}, a third-order collocation-based suite for the discretization of boundary integral equations. The discretization employed 500 points for the scattering geometry. 

For this example we will consider that $D$ is a kite-shaped inclusion with boundary $\partial D$ given by the parametrization
$$
\partial D : = \left(\cos\left(2\pi t\right) + \cos\left(4\pi t\right), 2\sin\left(2\pi t\right)\right) \qquad \text{ for }\; t\in[0,1)
 $$
 and the measurement curve $\Gamma$ is the circle centered at $0$ with radius $r=3$---as depicted in the left panel of Figure \ref{fig:NumericalExample}. Various values of the coefficients $\tau_-$ and $\tau_+$ are considered for fundamental incident waves of the form \eqref{fundamentalsol} with source points located at
$$ 
z_i = 3(\cos\theta_i,\sin\theta_i)\in\Gamma \quad \text{ for } \quad \theta_i = 2\pi(i-1)/64. 
$$ 
The central and right panels of Figure \ref{fig:NumericalExample} depict the real part and the magnitude of the real part of the total field generated by one such wave with source at $z=(-3,0)$ and interior/exterior coefficients $\tau_-=5$ and $\tau_+=15$. Table \ref{kiteTable} provides verification to the reciprocity relation for the matrices $\mathbf N_1$ and $\mathbf N_2$ defined in the previous example for various values of  $\tau_-$ and $\tau_+$. \\

\begin{figure}
\centering
\includegraphics[width = 0.28\linewidth]{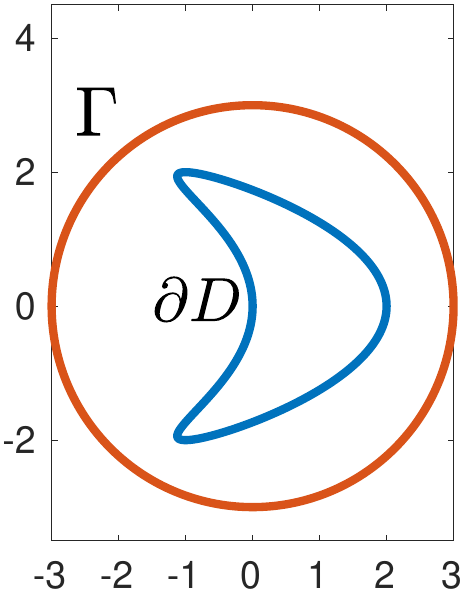}  \qquad\quad
\includegraphics[width = 0.27\linewidth]{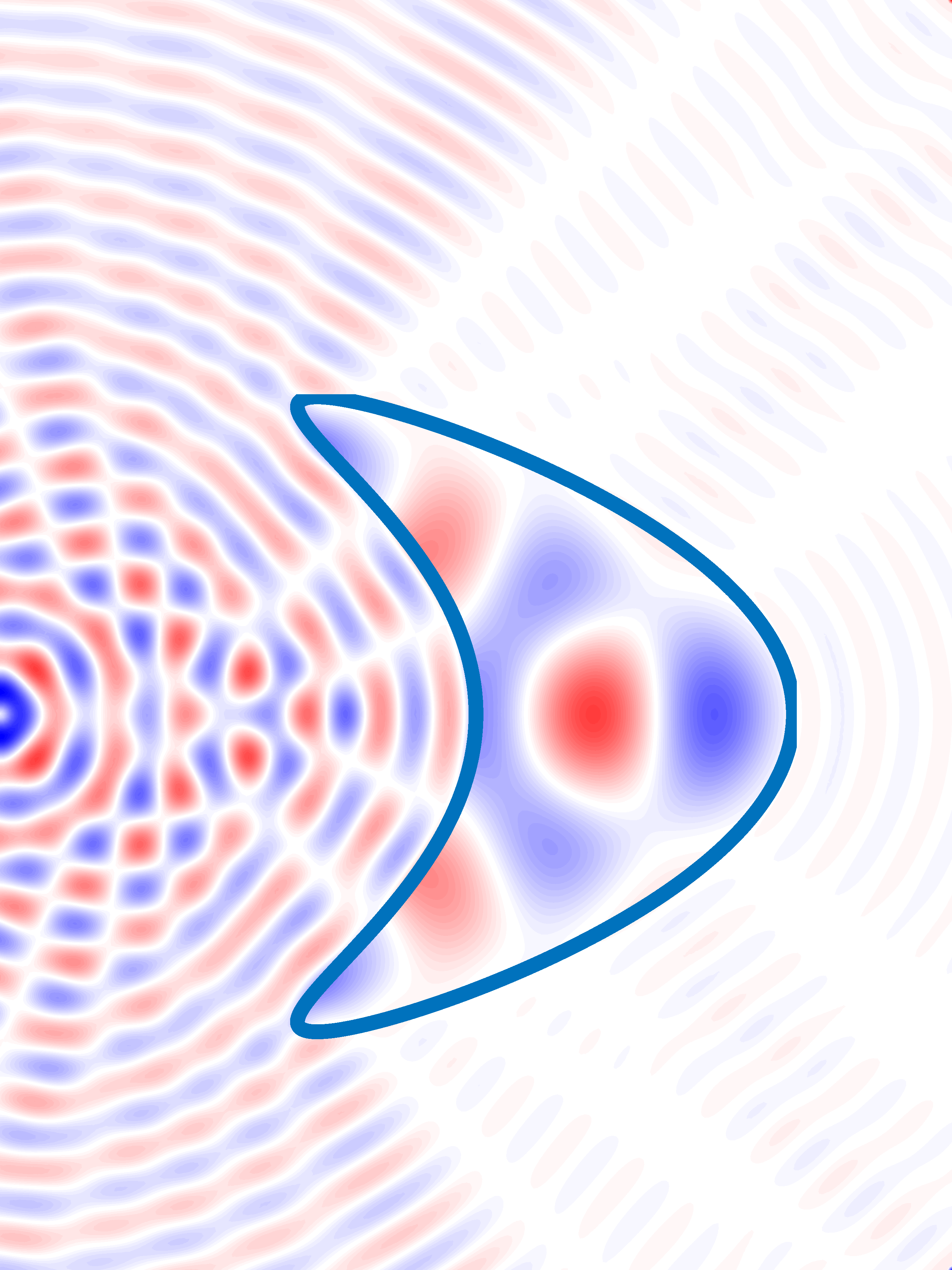}\qquad\quad
\includegraphics[width = 0.27\linewidth]{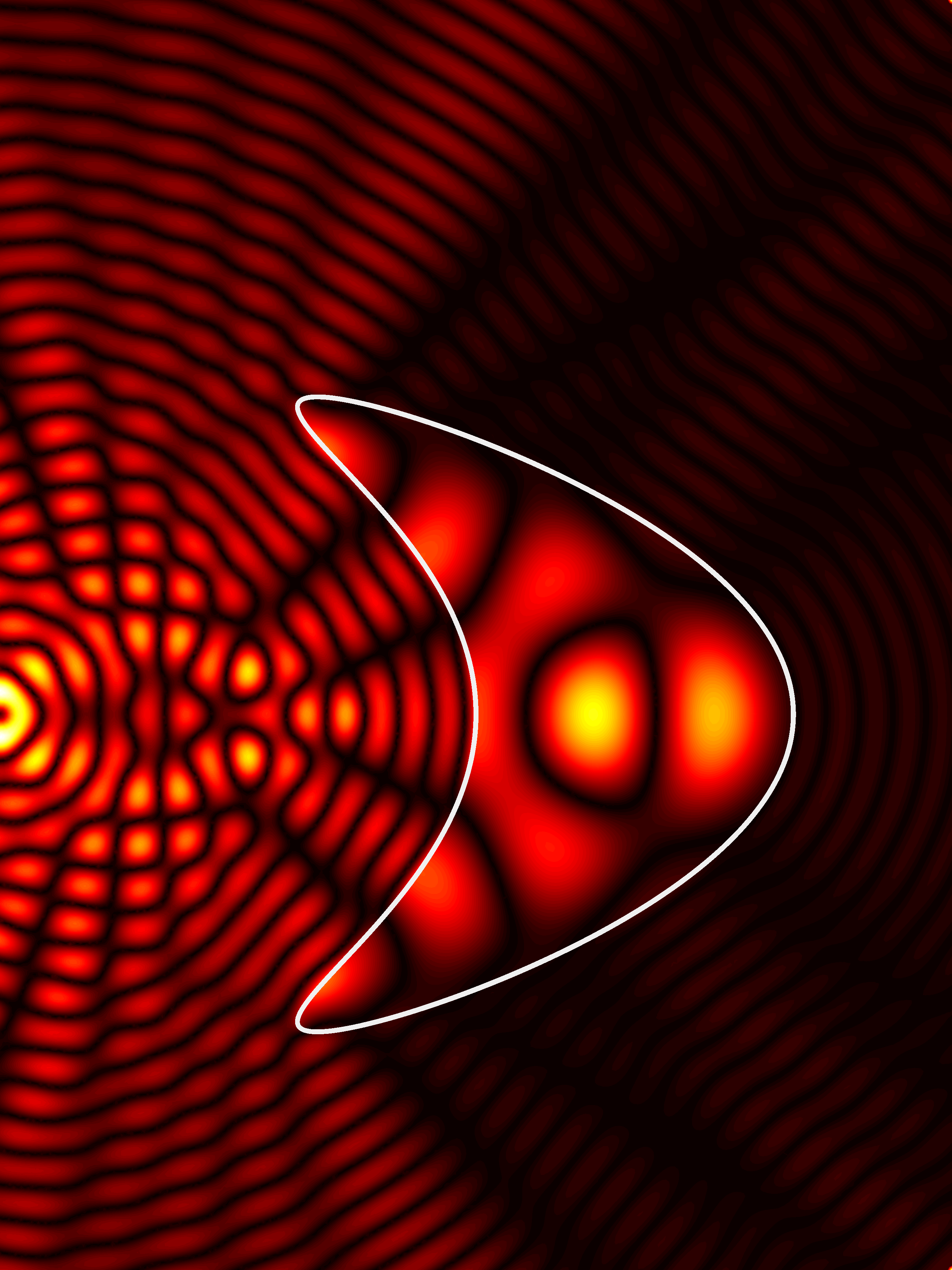} 
\caption{Left: Boundary of the kite-shaped scatterer, $\partial D$, and measurement curve, $\Gamma$. Center and Right: An incident fundamental wave located at $z=(-3,0)$ propagates through a medium with $\tau_+= 15$ and impinges upon a scatterer with $\tau_-=5$. The center panel shows the real part of the total wavefield (blue is negative, red is positive), while the right panel shows the magnitude of the real part.}\label{fig:NumericalExample}
\end{figure}
 
 \begin{table}[ht]
\centering
 \begin{tabular}{r|c}
  $(\tau_-,\tau_+ )$ & $\|\textbf{N}_1 - \textbf{N}_2 \|_2$ \\
\hline
$(5,15)$ &  $1.12 \times 10^{-4}$\\
$(5+\pi,15-\pi)$ & $1.53\times 10^{-4}$ \\
$(5+2\pi,15-2\pi)$  & $2.56\times 10^{-4}$ \\
$(9.5,10.5)$ & $7.00\times 10^{-5}$ \\
$(10.5, 9.5)$  & $8.91\times 10^{-5}$ \\
$(11,17.7)$ & $6.16\times 10^{-4}$ \\
\hline
 \end{tabular}
 \caption{Norm of the difference in the numerical approximation of the near-field matrices for various $\tau_-$ and $\tau_+$.}\label{kiteTable}
\end{table}

\noindent{\bf Acknowledgments:} 

\noindent{The authors are thankful for the support of the U. S. National Science Foundation. The research of Isaac Harris is partially supported by the NSF DMS Grant 2208256. Tonatiuh S\'anchez-Vizuet was partially funded through the grant NSF-DMS-2137305. 

{\footnotesize

}

\end{document}